\numberwithin{equation}{section}
\newtheorem{theorem}{\textbf{Theorem}}[section]
\newtheorem{lemma}[theorem]{Lemma}
\newtheorem{remark}[theorem]{\textbf{Remark}}
\title{The Discrete Unbounded Coagulation-Fragmentation Equation with Growth, Decay and Sedimentation}
\author[1]{J. Banasiak}
\author[2]{L.O. Joel}
\author[3]{S. Shindin}
 \affil[1]{Department of Mathematics and Applied Mathematics\\
 	University of Pretoria\\
 	Pretoria, South Africa \& Institute of Mathematics, Technical University of \L\'{o}d\'{z}, \L\'{o}d\'{z}, Poland\\ \textit{e-mail: jacek.banasiak@up.ac.za}}\date{}
\affil[2,3]{School of Mathematics, Statistics and Computer Science\\
           University of Kwazulu-Natal\\
           Westville Campus, Durban\\
           South Africa\\ \textit{e-mails: oluwaseyejoel@gmail.com \& shindins@ukzn.ac.za}}\date{}
\begin{document}

\maketitle

\pagestyle{myheadings}
\markboth{J. Banasiak, L.O. Joel and S. Shindin}{Discrete Coagulation-Fragmentation with Decay}
\thispagestyle{empty}

\begin{abstract}
\noindent
In this paper we study the discrete coagulation--fragmentation models with growth, decay
and sedimentation. We demonstrate the existence and uniqueness of classical global
solutions provided the linear processes are sufficiently strong. This paper extends several previous results both by considering a more general model and and also signnificantly weakening the assumptions. Theoretical conclusions are
supported by numerical simulations.
\end{abstract}

\noindent
\textbf{2010 MSC:} 34G20, 47D05, 47H07, 47H14, 47H20, 65J15, 82D, 70F45, 92D25.\\
\textbf{Keywords:} discrete fragmentation--coagulation models, birth-and-death processes, $C_0$-semigroups, analytic semigroups, semilinear problems, numerical simulations.

\section{Introduction}
Coagulation refers to the aggregation of smaller clusters of particles to form larger ones. Some terms that
are being used interchangeably with coagulation are aggregation and clustering. The first mathematical model to study such processes was proposed by Marian von Smoluchowski, who in
\cite{Smoluchowski1916, Smoluchowski1917} introduced and analysed the following system of equations
\begin{equation}\label{eq1.1}
\frac{d f_{i}(t)}{d t} =  \frac{1}{2} \sum_{j=1}^{i-1} k_{i-j,j}f_{i-j}(t)f_{j}(t)
-  \sum_{j=1}^{\infty} k_{i,j} f_{i}(t) f_{j}(t),\quad i\ge 1,\; t>0.
\end{equation}
The system describes the so-called discrete coagulation, where it is assumed that any cluster consists of a finite number of monomers; that is, building blocks of minimal size (or, interchangeably, mass), taken to be equal to 1. The number of clusters of size $i$, called $i$-clusters, at any time $t\geq 0$ is given by $f_i(t)$ and   $k_{i,j}, i,j\geq 1 $ are the coagulation rates, i.e. the rates at which  the clusters of mass $i$ and $j$ join each other to form a cluster of mass $i+j.$ The first term on the right-hand side, called the gain term,  describes the rate of the emergence of
$i$-clusters by coagulation of $j$ and $j-i$ clusters with $j<i$, while the second term, called the loss term, gives the rate of removal  of $i$-clusters due to the
coalescence with other ones. The factor  $\frac{1}{2}$ ensures that double counting due to symmetry is avoided.

The Smoluchowski equations describe an irreversible process. In reality coagulation is almost always coupled with a fragmentation process in which clusters split into smaller ones. The first model including fragmentation is due to Becker and D\"oring \cite{Becker1935} who, however, only considered the situation in which a monomer could join or leave a cluster according to the scheme
\begin{align*}
&C_r + C_1   \rightarrow    C_{r+1},   \qquad  \text{coagulation process}, \\
&C_{r+1}      \rightarrow    C_r + C_1,  \qquad  \text{fragmentation process}.
\end{align*}
A comprehensive analysis of the Becker--D\"{o}ring model can be found in Wattis, \cite{Wattis2006}.  As far as the coagulation is concerned, the Becker--D\"{o}ring model is a simplification of the Smoluchowski equation and, alleviating this shortcoming, Blatz and Tobolsky formulated a full fragmentation--coagulation model in \cite{blatz1945}. As noted in \cite{Collet2004}, the Becker-D\"oring model can describe an early stage of the fragmentation--coagulation process, called the \textit{nucleation stage},  when the monomers interact to build bigger clusters but still make up
the majority of the ensemble.

We note that there is a parallel continuous theory of fragmentation--co\-agu\-lat\-ion processes in which it is assumed that the size of a particle can be any positive number. We are not concerned with such models here and the interested reader is referred to the recent monograph, \cite{BLL2018}.

Further research on fragmentation--coagulation models have led to extensions that include other internal or external processes such as diffusion or transport of clusters in space, or their decay or growth, see e.g. \cite{Canizo2010, Collet1996, Wrzo97}. In particular, in applications to life sciences the clusters consist of living organisms and can change their size not only due to the coalescence or splitting, but also due to internal demographic processes  such as death or birth of organisms inside, see \cite{Oku86, Oku, Gue, Jack90, Mir18}. Also, in some fields, notably in the phytoplankton dynamics, the removal of whole clusters due to their sedimentation is an important process that is responsible for rapid clearance of the organic material from the surface of the sea.  The removal of clusters of suspended solid particles from a mixture is also important in water treatment, biofuel production, or beer fermentation. In all these applications the size distribution of the clusters is a crucial parameter controlling the efficacy of the process, \cite{Ackleh1, Jack90,Mir18}. Thus, models coupling the fragmentation, coagulation, birth, death and removal processes are relevant in many applications and hence in  this paper we focus on analysing the following  comprehensive system,
\begin{equation}\label{eq1.3}
\begin{split}
\frac{d f_{i}}{d t} &= g_{i-1}f_{i-1} - g_{i}f_{i} + d_{i+1}f_{i+1} - d_{i}f_{i} - s_{i}f_{i} -  a_{i}f_{i}  \\
& + \sum_{j= i+1}^{\infty} a_{j} b_{i,j} f_{j} + \frac{1}{2}\sum_{j=1}^{i-1}k_{i-j,j}
f_{i-j}f_{j} -  \sum_{j=1}^{\infty} k_{i,j} f_{i} f_{j}, \\
&f_{i}(0) = \mathring f_{i}, \quad i \geq 1,
\end{split}
\end{equation}
where $f = (f_i)_{i=1}^\infty$ gives the numbers $f_i$ of clusters of mass $i$, and, to shorten notation, we adopted the convention that $g_0 =f_0 =0$. The nonnegative
coefficients $g_i$, $d_{i}$ and $s_{i},$ $i\geq 1$, control the growth, the decay and the sedimentation processes, respectively.
The fragmentation rates are given by $a_{i}, $ while $ b_{i,j} $ is the average number of $i$-mers produced
after the breakup of a $j$-mer, with $ j \ge i$. The difference operators $f \to (g_{i-1}f_{i-1} - g_{i}f_{i})_{i=1}^\infty$
and $f \to (d_{i+1}f_{i+1} - d_{i}f_{i})_{i=1}^\infty$ describe the rate of change of the number of particles due
to, respectively, the birth and death/decay process. The form of these operators can be obtained as in the standard birth-and-death Markov process, e.g. \cite{BR}, assuming that only one birth or death event can occur in a cluster of cells in a short period of time so that an  $i$-cluster only may become an $i+1$, or an $i+1$-cluster.
If we set $g_{i} = d_{i} = s_{i} = 0, i \geq 1, $ then we arrive at the classical mass-conserving coagulation-fragmentation equation.

Since clusters can only fragment into smaller pieces, we have
\[
 a_{1} = 0, \qquad  b_{i, j} = 0,  \quad  i \geq  j.
\]
We also assume that all clusters that are not monomers undergo fragmentation;
that is,  $a_{i} > 0$ for $i \ge 2$. Since the fragmentation process only consists
in the rearrangement of the total mass into clusters, it must be conservative
and hence we require
\[
\sum_{i=1}^{j-1} i b_{i,j} = j, \quad j \geq 2.
\]
The main aim of this paper is to prove the existence of global classical solutions to \eqref{eq1.3} and provide a working numerical scheme for solving it. Thanks to recent results showing that the linear part of the problem generates an analytic semigroup, \cite{Banasiak2018}, in this paper we significantly extended  well-posedness results existing in the literature, see e.g.  \cite{Bana12b, daCo95}, by considering  more general models, removing many constraints on the coefficients of the problem and proving all results for classical solutions, and for weak solutions considered by most earlier works.

The paper is organized as follows. In Section \ref{secPrem} we recall the main results of \cite{Banasiak2018} concerning the analysis of the linear part of the problem and introduce relevant tools from the interpolation theory. Section \ref{secGlo} contains the proof of the global well-posedness of the problem. The idea of the analysis is classical but due to numerous technicalities specific to the problem at hand, as well as because some interim estimates are used in Section \ref{secNum}, we decided to provide an outline of the proofs.  Finally, in Section \ref{secNum} we construct finite dimensional truncations of \eqref{eq1.3}, prove the convergence of their solutions to the solutions of \eqref{eq1.3} as the dimension of the truncation goes to infinity, and use the obtained results to provide rigorous numerical simulations.

\textbf{Acknowledgements.} The research was supported by the NRF grants N00317 and N102275, and  the National Science Centre, Poland, grant 2017/25/B /ST1/00051.

\section{Preliminaries}\label{secPrem}
\subsection{The linear part}
The linear part of the model \eqref{eq1.3} is discussed in details in \cite{Banasiak2018}. In what follows
we briefly mention the key results obtained there that are pertinent to the analysis
of the complete nonlinear model.

In the space
\[
X_p = \Bigl\{f:=(f_i)_{i = 1}^\infty\, : \, \|f\|_p = \sum_{i\ge 1} i^p |f_i|\Bigr\},
\]
we consider the operators $(T_p, D(T_p))$, $(G_p, D(G_p))$, $(D_p, D(D_p))$ and $(B_p, D(B_p))$
defined by
\begin{equation*}
\begin{aligned}
& [ T_p f]_i = - \theta_i f_i, \qquad  \qquad
[ G_p  f]_i =   g_{i-1}f_{i-1},  \\
& [ D_p  f]_i =   d_{i+1} f_{i+1},  \qquad
[B_p  f]_{i} = \sum_{j= i+1}^{\infty} a_{j} b_{i,j} f_{j}, \quad i \geq 1,
\end{aligned}
\end{equation*}
where $ \theta_i := a_i + g_i + d_i + s_i, i\geq 1,$ and  $g_0 = a_1=0$. Further, we denote \begin{equation}
\Delta_i^{(p)} :=i^p-\sum\limits_{j=1}^{i-1}j^pb_{j,i}, \quad i\geq 2,\; p\geq 0.\label{Delta}
\end{equation}
Then the following holds (see \cite{Banasiak2018} for the details):
\begin{theorem}\label{thm2.1}

If for some $p_0>1$
\begin{equation}\label{eq2.1}
\liminf\limits_{i\to \infty}\frac{a_i}{\theta_i}\frac{\Delta_i^{({p_0})}}{i^{p_0}}>0,
\end{equation}
then for any $p>1$ the sum $({Y}_p, D({Y}_p)) = (T_p+G_p+D_p+B_p, D(T_p))$
generates a positive analytic $C_0$-semigroup $\{S_p(t)\}_{t\ge 0}$ in $X_p$.
\end{theorem}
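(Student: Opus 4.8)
The plan is to write $Y_p=T_p+K_p$ with $K_p:=G_p+D_p+B_p$ and treat $K_p$ as a perturbation of the diagonal operator $T_p$. Since $[T_pf]_i=-\theta_if_i$ with $\theta_i\ge0$, the resolvent $(\mu-T_p)^{-1}$ is multiplication by $(\mu+\theta_i)^{-1}$, so $\|(\mu-T_p)^{-1}\|_{X_p\to X_p}=\sup_i|\mu+\theta_i|^{-1}$; estimating $\inf_i|\mu+\theta_i|$ on a sector $\Sigma_\delta:=\{\mu\ne0:|\arg\mu|\le\pi/2+\delta\}$ (use $|\mu+\theta_i|\ge|\mathrm{Im}\,\mu|$ when $|\arg\mu|\ge\pi/2-\delta$ and $|\mu+\theta_i|\ge\mathrm{Re}\,\mu$ otherwise) gives $\|(\mu-T_p)^{-1}\|\le(|\mu|\sin\delta)^{-1}$ for $0<\delta\le\pi/4$. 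Thus $T_p$ generates a bounded positive analytic $C_0$-semigroup, and $D(T_p)$, containing all finitely supported sequences, is dense in $X_p$.

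The arithmetic heart of the argument is the moment identity obtained, for finitely supported $h\ge0$, by interchanging the (nonnegative) summations:
\[
\sum_{i\ge1}i^p[K_ph]_i=\sum_{i\ge1}h_i\bigl(i^p\theta_i+\phi_i^{(p)}\bigr),\qquad
\phi_i^{(p)}:=g_i\bigl((i+1)^p-i^p\bigr)-d_i\bigl(i^p-(i-1)^p\bigr)-a_i\Delta_i^{(p)}-s_ii^p.
\]
Only the $g_i$-term is positive, and as $g_i\le\theta_i$ and $(i+1)^p-i^p=O(i^{p-1})$ it is at most $C_p\,i^{-1}\,i^p\theta_i$, whereas \eqref{eq2.1} forces $a_i\Delta_i^{(p)}\ge c_p\,i^p\theta_i$ eventually; hence $\phi_i^{(p)}\le-\tfrac12 c_p\,i^p\theta_i$ for $i\ge N_p$. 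Two points need care here. First, \eqref{eq2.1} is assumed only at $p_0$: writing $\Delta_i^{(p)}/i^p=\int_0^1(1-x^{p-1})\,d\nu_i(x)$, where $\nu_i$ is the probability measure (by mass conservation $\sum_jjb_{j,i}=i$) assigning mass $(j/i)b_{j,i}$ to $j/i$, the function $(1-x^{p-1})/(1-x^{p_0-1})$ is continuous and positive on $(0,1)$ with finite positive limits at both ends, hence bounded below, so $\Delta_i^{(p)}/i^p\ge c(p,p_0)\,\Delta_i^{(p_0)}/i^{p_0}$ and $\liminf_ia_i\Delta_i^{(p)}/(\theta_ii^p)>0$ for every $p>1$. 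Second, a routine truncation argument extends the identity and the bound $\|K_ph\|_p\le(1-\tfrac14 c_p)\|T_ph\|_p+C\|h\|_p$ from finitely supported $h\ge0$ to all $h\in D(T_p)$, so that $D(T_p)\subseteq D(K_p)$ and $Y_p$ (with domain $D(T_p)$) is closed.

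To obtain analyticity I would estimate $\|K_p(\mu-T_p)^{-1}\|$ on $\Sigma_\delta$ directly. For $f\ge0$, positivity of $G_p,D_p,B_p$ gives $|K_pg|\le K_p|g|$ entrywise, so $\|K_p(\mu-T_p)^{-1}f\|_p\le\|K_ph\|_p$ with $h_i=f_i/|\mu+\theta_i|$, and the moment identity yields
\[
\|K_p(\mu-T_p)^{-1}f\|_p\le\sum_{i<N_p}\frac{i^p\theta_i+\phi_i^{(p)}}{|\mu+\theta_i|}\,f_i+\bigl(1-\tfrac12 c_p\bigr)\sum_{i\ge N_p}\frac{\theta_i}{|\mu+\theta_i|}\,i^pf_i.
\]
An elementary one-variable maximisation gives $\theta/|\mu+\theta|\le\sec\delta\to1$ as $\delta\to0$, so the second sum is $\le(1-\tfrac12 c_p)\sec\delta\,\|f\|_p\le(1-\tfrac14 c_p)\|f\|_p$ for $\delta$ small, while $|\mu+\theta_i|^{-1}\le(|\mu|\sin\delta)^{-1}$ makes the first sum $\le C_{N_p}(|\mu|\sin\delta)^{-1}\|f\|_p$; splitting a general $f$ into moduli removes the sign restriction. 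Hence $\|K_p(\mu-T_p)^{-1}\|\le1-\tfrac18 c_p<1$ for $\mu\in\Sigma_\delta$ with $|\mu|$ large, so $(\mu-Y_p)^{-1}=(\mu-T_p)^{-1}\sum_{n\ge0}\bigl(K_p(\mu-T_p)^{-1}\bigr)^n$ converges, giving $\|(\mu-Y_p)^{-1}\|\le M_p/|\mu|$ on $\Sigma_\delta$ outside a bounded set; together with density of $D(T_p)$ this is exactly the sectoriality needed for $Y_p$ to generate an analytic $C_0$-semigroup. Positivity is then immediate: for real $\mu$ large every term of the Neumann series is positive, so $(\mu-Y_p)^{-1}\ge0$, hence $\{S_p(t)\}_{t\ge0}$ is positive.

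I expect the main obstacle to be the borderline nature of the perturbation. The crude triangle-inequality estimate only gives $\|K_p(\mu-T_p)^{-1}\|\le1+o(1)$ — the relative bound of $K_p$ with respect to $T_p$ is generically $1$, not $0$ — so no off-the-shelf ``small perturbation of an analytic generator'' result applies, and one must genuinely use (i) hypothesis \eqref{eq2.1}, through the strict pointwise gain $\phi_i^{(p)}\le-\tfrac12 c_p\,i^p\theta_i$, and (ii) the $L^1$-type (AL) structure of $X_p$ together with positivity of $G_p,D_p,B_p$, to keep the constant strictly below $1$ uniformly on a whole sector. Within (i) the delicate steps are dominating the ``bad'' growth contribution $g_i\bigl((i+1)^p-i^p\bigr)$ by $a_i\Delta_i^{(p)}$ for large $i$, and passing from the single hypothesis at $p_0$ to all $p>1$ via the convexity estimate for $\Delta_i^{(p)}/i^p$; both are elementary but essential.
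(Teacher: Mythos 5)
Your proposal is correct, but it takes a genuinely different route from the paper, whose own proof is almost entirely a citation: for any exponent $p$ at which \eqref{eq2.1} holds, the generation of a positive analytic semigroup is taken directly from \cite[Theorem 2]{Banasiak2018}, so the only mathematics actually carried out in the paper is the propagation of hypothesis \eqref{eq2.1} from the single exponent $p_0$ to every $p>1$. For that step the paper notes that $p\mapsto \Delta_i^{(p)}/i^{p}$ is increasing and concave on $(1,\infty)$ and vanishes at $p=1$, which gives $\Delta_i^{(p)}/i^{p}\ge \min\{1,(p-1)/(p_0-1)\}\,\inf_i \Delta_i^{(p_0)}/i^{p_0}$; your integral representation $\Delta_i^{(p)}/i^{p}=\int_0^1(1-x^{p-1})\,d\nu_i(x)$ against the probability measure $\nu_i$ (which is a probability measure precisely by mass conservation), combined with the positive lower bound for $(1-x^{p-1})/(1-x^{p_0-1})$ on $(0,1)$, yields exactly the same comparison and is equally valid. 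Everything else in your write-up --- the sectorial resolvent bound for the diagonal part, the moment identity $i^p\theta_i+\phi_i^{(p)}=g_i(i+1)^p+d_i(i-1)^p+a_i\bigl(i^p-\Delta_i^{(p)}\bigr)$, the domination of the growth contribution $g_i\bigl((i+1)^p-i^p\bigr)=O(i^{p-1}\theta_i)$ by $a_i\Delta_i^{(p)}\ge c_p i^p\theta_i$, the bound $\theta/|\mu+\theta|\le\sec\delta$, and the use of positivity and the AL-structure of $X_p$ to keep $\|K_p(\mu-T_p)^{-1}\|$ strictly below $1$ on a sector before summing the Neumann series --- is a self-contained reconstruction of the cited generation theorem rather than of anything in this paper, and I found no gaps in it. Your diagnosis that the relative bound of $K_p$ with respect to $T_p$ is generically $1$, so that no off-the-shelf perturbation theorem applies and the hypothesis \eqref{eq2.1} must enter through the strict negativity of $\phi_i^{(p)}$ for large $i$, is exactly the point; what your version buys is independence from the external reference, at the cost of redoing a published argument that the authors deliberately outsource.
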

\begin{proof}
This result for $p\geq p_0$ was proved in \cite[Theorem 2]{Banasiak2018}. Here we show that if \eqref{eq2.1} holds for some $p_0>1$, then it also holds for all $p\in (1,p_0]$  and thus the argument of the proof of  \cite[Theorem 2]{Banasiak2018} applies for all $p>1$. We let $\phi_i(p) = \frac{\Delta_i^{({p})}}{i^{p}}$, $i\ge 2$, $p>1$. It is easy to verify that for any  $i\ge 2$ and $p>1$
$0< \phi_i(p)< 1$. This indicates, in particular, that condition
\eqref{eq2.1} is equivalent to the existence of constants $\alpha>0$ and $\beta>0$ such that
$$
\inf_{i\geq 2} \frac{a_i}{\theta_i} = \alpha >0
$$
and
\begin{equation}
\inf_{i\geq 2} \phi_i(p_0) = \beta >0.
\label{betadef}
\end{equation}
Straightforward computations  yield for $p>1, i\geq 2,$
\[
\phi_i'(p) = \frac{1}{i^p}\sum_{j=1}^{i-1} \ln\Bigl(\frac{i}{j}\Bigr) j^p b_{j,i}> 0, \quad
\phi_i''(p) = -\frac{1}{i^p}\sum_{j=1}^{i-1} \ln^2\Bigl(\frac{i}{j}\Bigr) j^p b_{j,i}< 0,
\]
so that each quantity $\phi_i(p)$ is monotone increasing and strictly concave on $(1,\infty)$. The monotonicity ensures that if \eqref{betadef} is satisfied for some $p_0$, then it is satisfied for and $p>p_0$. On the other hand, since $\phi_i(1)=0$, $i\ge 2$, the concavity
implies that for $p \in (1,p_0]$
$$
\phi_i(p) \geq  \frac{f(p_0)}{p_0-1}(p-1) \geq \beta \frac{p-1}{p_0-1}
$$ and hence
\[
\phi_i(p) \ge  \min\left\{1,\frac{p-1}{p_0-1}\right\}\beta>0,\quad
p>1.
\]
Hence \eqref{eq2.1} holds for all $p>1$ and, as in \cite[Theorem 2]{Banasiak2018}, we conclude that for each $p>1$,
the sum $({Y}_p, D({Y}_p)) = (T_p+G_p+D_p+B_p, D(T_p))$
generates a positive analytic $C_0$-semigroup $\{S_p(t)\}_{t\ge 0}$ in $X_p$.
\end{proof}

We mention that if the sedimentation is sufficiently strong,  the generation result extends to $p=1$. Indeed, in the same way as in \cite{Banasiak2017, Banasiak2018} one can show that the assumption
\begin{equation}\label{eq2.2}
\liminf_{i\to \infty}\left(s_i + \frac{d_i-g_i}{i}\right)\frac{1}{\theta_i} > 0,
\end{equation}
ensures that the sum $({Y}_1, D({Y}_1)) = (T_1+D_1+B_1+G_1, D(T_1))$ generates a  positive quasi-contractive analytic
$C_0$-semigroup $\{S_1(t)\}_{t\ge 0}$ in $X_1$.

\subsection{Intermediate Spaces}
\label{sec2.3}
In view of Theorem~\ref{thm2.1}, we define
$$X_{p,1} = \{f\, :\, f \in X_{p} \cap D(T_p), \| f \|_{p, 1} = \| (1+\theta)f  \|_p\},$$
where $\theta := (\theta_i)_{i=1}^\infty,$ and consider
the intermediate spaces $X_{p, \alpha} = (X_{p}, X_{p, 1})_{\alpha,1}$, $0<\alpha<1$, where
$(\cdot,\cdot)_{\alpha,1}$ is the standard real interpolation functor (see \cite[Theorem 5.4.1]{bergh1976}).
We observe that both spaces $X_p$ and $X_{p,1}$ are weighted versions of $\ell^1$,
consequently, the weighted Stein-Weiss interpolation theorem \cite[Theorem 5.4.1]{bergh1976} applies
and the norm in $X_{p, \alpha} $ is given by the expression
\begin{equation}\label{eq2.3}
\|f\|_{p, \alpha} = \sum_{i\ge 1} i^p (1+\theta_i)^\alpha |f_i|,\quad 0<\alpha<1.
\end{equation}
The interpolation functor $(\cdot,\cdot)_{\alpha,1}$ is known to be exact \cite{bergh1976}. Hence,
for any bounded linear operator $T \in L(X_p, X_p) \cap L(X_p, X_{p,1})$, we have
$T\in L(X_p, X_{p, \alpha})$ and $ \|T\|_{X_p \to X_{p, \alpha}}
= \|T\|_{X_p \to X_p}^{1-\alpha}\| T \|_{X_p \to X_{p, 1}}^\alpha$, $0 < \alpha < 1$.
In our case we observe  that $\|S_p(t)\|_{X_p \to X_p} \le c_{0,p}e^{\omega_p t}$,
for all $t\ge 0$ and some fixed $c_{0,p},\omega_p>0$,
while, due to the analyticity,  $\|S_p(t)\|_{X_p \to X_{p,1}} \le \tfrac{c_{1,p}}{t}e^{\omega_p t}$, $t>0$,
see \cite[Theorem II.4.6(c)]{Engel2000}. It follows that
$S_p(t)\in L(X_p, X_{p, \alpha})$ and
\begin{subequations}\label{eq2.4}
\begin{equation}\label{eq2.4a}
\| S_p (t) \|_{X_p \to X_{p, \alpha}}  \le  \frac{c_{0,p}^{1-\alpha} c_{1,p}^\alpha}{t^\alpha}
e^{\omega_p t} =:\frac{c_{\alpha,p}}{t^\alpha}e^{\omega_p t}, \quad t>0, \quad 0 < \alpha < 1.
\end{equation}
In addition, since $S_p(t)\in L(X_{p, 1}, X_{p, 1})\cap L(X_p, X_p)$, see
\cite[Theorem II.4.6(c)]{Engel2000}, similar arguments imply
\begin{equation}\label{eq2.4b}
\| S_p (t) \|_{X_{p, \alpha} \to X_{p, \alpha}}  \le  c_{0,p}e^{\omega_p t}, \quad t\ge0, \quad 0 < \alpha < 1
\end{equation}
and
\begin{equation}\label{eq2.4c}
\| S_p(t) \|_{X_{p, \alpha} \to X_{p,1}}  \le  \frac{c_{0,p}^\alpha c_{1,p}^{1-\alpha}}{t^{1-\alpha}}
e^{\omega_p t} =: \frac{c_{\alpha,p}'}{t^{1-\alpha}}e^{\omega_p t},\quad t>0, \quad 0 < \alpha < 1.
\end{equation}
\end{subequations}
In the sequel, we make use of the operator
\[
(Y_{\gamma,p,\beta}, D(T_p)):=(Y_p + \gamma T_{p,\beta}, D(T_p)),
\quad [T_{p,\beta}f]_i = -(1+\theta_i)^\beta f_i,\quad i\ge 1,
\]
where $\gamma$ is a positive parameter and $0\le \beta \leq 1$. Using \cite[Corollary 3.2.4]{Pa} for $0\le \beta <1$ (and obvious addition if $\beta =1$)  and an argument analogous to that in the proof of \cite[Theorem 5.1]{Bana12b}, we verify that under assumptions of Theorem~\ref{thm2.1}, $(Y_{\gamma,p,\beta}, D(T_p))$
generates a positive analytic $C_0$-semigroup  $\{S_{\gamma,p,\beta}(t)\}_{t\ge 0}$ in $X_p$ for all
$p>1$ and $\gamma>0$. Furthermore,
$\|S_{\gamma,p,\beta}(t)\|_{X_p\to X_p}\le \|S_{p}(t)\|_{X_p\to X_p}$,
$\|S_{\gamma,p,\beta}(t)\|_{X_p\to X_{p,\alpha}}\le \|S_{p}(t)\|_{X_p\to X_{p,\alpha}}$ and
$\|S_{\gamma,p,\beta}(t)\|_{X_p\to X_{p,1}}\le \|S_{p}(t)\|_{X_p\to X_{p,1}}$, uniformly in
$\gamma>0$ and $0\le \beta\le 1$,\footnote{For non-negative sequences $(f_i)_{i\ge1}$ with
finitely  many nonzero entries, the respective bounds are easy consequences of the positivity of
operators $\{S_p(t)\}_{t\ge 0}$,
$\{S_{\gamma,p,\beta}(t)\}_{t\ge 0}$, $-T_{p,\beta}$ and the variation of constant formula.
General result follows immediately from the standard monotone limit argument.}
so that the estimates \eqref{eq2.4}, with the constants $c_{0,p}$, $c_{1,p}$, $c_{\alpha,p}$ and
$c_{\alpha,p}'$,
hold for the operator $S_{\gamma,p,\beta}(t)$ as well. In fact, $\{S_{\gamma,p,\beta}(t)\}_{t\ge 0}$
is substochastic when $\gamma>0$ is sufficiently large, i.e. \eqref{eq2.4} hold with $\omega_p=0$ in that case.

\section{Global well-posedness}\label{secGlo}
In this section, we provide a well-posedness analysis of the complete semilinear model \eqref{eq1.3}.
We assume that all the conditions of Theorem~\ref{thm2.1} are satisfied. In addition, we impose the
following bound on the coefficients of the coagulation kernel
\begin{equation}\label{eq3.1}
k_{i,j} \le \kappa ((1+ \theta_{i})^{\alpha} + (1+\theta_{j})^{\alpha}),\quad i,j\ge 1,\; 0<\alpha<1.
\end{equation}
The analysis proceeds in a number of simple but technical steps. For the readers convenience the proofs
of main results are broken into a sequence of short independent statements.

\subsection{Local analysis}
The analysis presented below is fairly standard. We convert \eqref{eq1.3} into an equivalent Volterra
type integral equation and then employ a variant of the classical Picard-Lindel\"of iterations to obtain
local mild solutions. Then, with some additional work it is not difficult to verify that the mild solutions
are in fact classical. The calculations are similar to that of e.g. \cite[Section 6.3]{Pa} or \cite{Bana12b} but, as some intermediate estimates are needed for calculations in Section \ref{secNum}, we provide an outline of the proofs.

\begin{lemma}\label{lm3.1}
Assume for some $p>1$ conditions \eqref{eq2.1} and \eqref{eq3.1} are satisfied. Then for each
$f_0 \in X_{p,\alpha}^+$ \footnote{Here and in what follows, for a subset $U$ of any of the sequence space considered in the paper, by $U^+$ we denote the subset consisting of all nonnegative sequences in $U$.}
and some $T > 0$, the initial value problem \eqref{eq1.3}
has a unique non-negative mild solution $f \in C([0, T], X_{p,\alpha})$.
\end{lemma}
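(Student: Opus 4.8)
The plan is to recast \eqref{eq1.3} as the Volterra integral equation
\[
f(t) = S_p(t)f_0 + \int_0^t S_p(t-s)\,\mathcal{N}(f(s))\,ds,
\]
where $\mathcal{N}$ is the quadratic coagulation operator
$[\mathcal{N}(f)]_i = \tfrac12\sum_{j=1}^{i-1}k_{i-j,j}f_{i-j}f_j - f_i\sum_{j=1}^{\infty}k_{i,j}f_j$, and to solve it by a contraction argument in $C([0,T],X_{p,\alpha})$ for small $T$, exploiting the smoothing bound \eqref{eq2.4a}. The first and central step is to prove the mapping property $\mathcal N\colon X_{p,\alpha}\to X_p$ together with the bilinear estimate $\|\mathcal N(f)-\mathcal N(g)\|_p\le C_{\mathcal N}\bigl(\|f\|_{p,\alpha}+\|g\|_{p,\alpha}\bigr)\|f-g\|_{p,\alpha}$. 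This is exactly where hypothesis \eqref{eq3.1} enters: bounding $i^p\le 2^{p-1}\bigl((i-j)^p+j^p\bigr)$ in the gain term, reindexing its double sum by $(i-j,j)$, inserting $k_{i,j}\le\kappa\bigl((1+\theta_i)^\alpha+(1+\theta_j)^\alpha\bigr)$, and using $\sum_i(1+\theta_i)^\alpha|f_i|\le\|f\|_{p,\alpha}$ and $\|f\|_p\le\|f\|_{p,\alpha}$, one checks that every resulting term is a product of two $X_{p,\alpha}$-norms (one of which may be replaced by the weaker $\|f\|_p$); the Lipschitz bound then follows by polarization since $\mathcal N$ is quadratic.

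Next I would fix $T\le1$, set $R:=2c_{0,p}e^{\omega_p}\|f_0\|_{p,\alpha}$, and show that the map $\mathcal F(f)(t):=S_p(t)f_0+\int_0^tS_p(t-s)\mathcal N(f(s))\,ds$ leaves invariant the ball $\mathcal B_{R,T}:=\{f\in C([0,T],X_{p,\alpha}):\|f\|_{C([0,T],X_{p,\alpha})}\le R\}$ and is a strict contraction on it once $T$ is small. Indeed, \eqref{eq2.4b} controls the free term by $c_{0,p}e^{\omega_pT}\|f_0\|_{p,\alpha}$, while \eqref{eq2.4a} and the previous step bound the convolution by $c_{\alpha,p}e^{\omega_pT}C_{\mathcal N}R^2\,T^{1-\alpha}/(1-\alpha)$, which tends to $0$ as $T\to0$ because $\alpha<1$ makes the kernel $(t-s)^{-\alpha}$ integrable; the same computation gives the contraction constant $2c_{\alpha,p}e^{\omega_pT}C_{\mathcal N}R\,T^{1-\alpha}/(1-\alpha)$. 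Banach's theorem then yields a unique fixed point in $\mathcal B_{R,T}$, i.e. a mild solution, and local uniqueness in all of $C([0,T],X_{p,\alpha})$ follows from a Gronwall-type argument applied to the difference of two solutions. That $\mathcal F(f)$ genuinely lies in $C([0,T],X_{p,\alpha})$ uses the strong continuity of $\{S_p(t)\}_{t\ge0}$ on the interpolation space $X_{p,\alpha}$ and the standard fact that convolving an $X_p$-continuous function against the kernel $t^{-\alpha}e^{\omega_p t}$ produces an $X_{p,\alpha}$-continuous function (cf. \cite[Sec.~6.3]{Pa}).

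It remains to obtain non-negativity, and for this I would re-run the iteration in semilinearized form: put $f^{(0)}(t)=S_p(t)f_0$ and let $f^{(n+1)}$ solve the linear non-autonomous problem obtained by freezing the coagulation loss coefficient,
\[
\tfrac{d}{dt}f^{(n+1)} = Y_pf^{(n+1)} - \Bigl(\textstyle\sum_{j}k_{i,j}f_j^{(n)}(t)\Bigr)_{i}f^{(n+1)} + \Bigl(\tfrac12\textstyle\sum_{j=1}^{i-1}k_{i-j,j}f_{i-j}^{(n)}f_j^{(n)}\Bigr)_{i},\qquad f^{(n+1)}(0)=f_0.
\]
By \eqref{eq3.1} the frozen loss coefficient is non-negative and dominated by a constant multiple of $(1+\theta_i)^\alpha$ with $\alpha<1$, hence it is an admissible lower-order perturbation of $Y_p$ in the spirit of the operators $Y_{\gamma,p,\beta}$ of Section~\ref{sec2.3}; the associated evolution family $\{U_n(t,s)\}$ is therefore well defined, analytic, and — being assembled from the positive semigroup generated by $Y_p$ and a subtracted positive multiplication — positivity preserving. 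Since $f_0\ge0$ and the frozen gain term is non-negative whenever $f^{(n)}\ge0$, the variation-of-constants formula gives $f^{(n+1)}\ge0$ by induction; the same estimates as above show $(f^{(n)})_n$ is Cauchy in $C([0,T],X_{p,\alpha})$ for small $T$, so its limit is non-negative and, by uniqueness, coincides with the mild solution already constructed. The main obstacles I anticipate are precisely the two points just flagged: the nonlinearity estimate of the first step, where one must see that the quadratic coagulation term, a priori landing in a space rougher than $X_{p,\alpha}$, is tamed by \eqref{eq3.1} into $X_p$ so that the $\alpha$-smoothing of the analytic semigroup can absorb it; and the justification that the frozen-coefficient evolution family is simultaneously well defined (the perturbation being unbounded) and positive.
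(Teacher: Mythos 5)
Your existence--uniqueness argument is essentially the one in the paper: the same Volterra reformulation, the same bilinear estimate $\|\mathcal N(f)-\mathcal N(g)\|_p\le C(\|f\|_{p,\alpha}+\|g\|_{p,\alpha})\|f-g\|_{p,\alpha}$ driven by \eqref{eq3.1}, and the same contraction argument in $C([0,T],X_{p,\alpha})$ using the smoothing bound \eqref{eq2.4a}; the only cosmetic difference is that the paper centres the ball at the free trajectory $t\mapsto S_{\gamma,p,\alpha}(t)f_0$ and works with a shifted generator, while you centre at the origin and keep $Y_p$ unshifted. That part is fine.

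The non-negativity step is where you diverge, and as written it has a gap. You freeze the loss coefficient and invoke an evolution family $U_n(t,s)$ for the non-autonomous operator $Y_p-V_n(t)$, $[V_n(t)f]_i=\bigl(\sum_j k_{i,j}f_j^{(n)}(t)\bigr)f_i$. Two things are asserted but not established: (i) that this evolution family exists at all --- $V_n(t)$ is an \emph{unbounded} multiplication operator (comparable to $(1+\theta_i)^\alpha$), and at this stage of the argument $t\mapsto f^{(n)}(t)$ is only known to be continuous in $X_{p,\alpha}$, not H\"older continuous, so the standard parabolic (Tanabe/Acquistapace--Terreni) generation theorems for non-autonomous equations do not apply directly; and (ii) that $U_n(t,s)$ is positive --- ``assembled from a positive semigroup and a subtracted positive multiplication'' is not an argument when the subtracted operator is unbounded; one would need, e.g., a truncation of $V_n$ followed by a monotone limit. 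The paper avoids both issues with a purely \emph{autonomous} device: it subtracts once and for all the diagonal term $\gamma(1+\theta_i)^\alpha f_i$ with $\gamma=(1+\omega_p+2\kappa)(1+c_{0,p}\|f_0\|_{p,\alpha})$ from the linear part (the resulting $Y_{\gamma,p,\alpha}$ still generates a positive analytic semigroup with the \emph{same} constants, as recorded in Section~\ref{sec2.3}), and adds $\gamma(1+\theta_i)^\alpha f_i$ back to the nonlinearity. On the non-negative part of the ball one then has, by \eqref{eq3.1}, $\sum_j k_{i,j}f_j\le 2\kappa\|f\|_{p,\alpha}(1+\theta_i)^\alpha\le\gamma(1+\theta_i)^\alpha$, so the full modified nonlinearity $F_{\gamma,\alpha}$ is componentwise non-negative there; the positive cone is then invariant under the Picard map and the fixed point is non-negative with no evolution-family machinery at all. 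I recommend replacing your frozen-coefficient iteration by this shift, or else supplying the missing construction and positivity proof for $U_n(t,s)$.
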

\begin{proof}
(a) To begin, we cast the equation \eqref{eq1.3} in the form of the Abstract Cauchy Problem (ACP), i.e.
\[
\frac{df}{dt} = Y_{\gamma,p,\alpha} f + F_{\gamma,\alpha}(f),\quad f(0) = f_0\in X_{p,\alpha},
\]
where
\begin{align}
[F_{\gamma,\alpha}(f)]_i &:= \gamma(1+\theta_i)^\alpha f_{i}  + [F_1(f)]_i  - [F_2(f)]_i \nonumber\\&:= \gamma(1+\theta_i)^\alpha f_i+\frac{1}{2}\sum_{j=1}^{i-1} k_{i-j,j} f_{i-j}f_j
- \sum_{j=1}^\infty k_{i,j}f_i f_j,\quad i\ge 1,\label{F1F2}
\end{align}
and $\gamma = (1+\omega_p+2\kappa)(1+c_{0,p}\|f_0\|_{p,\alpha})$.
As noted above, $\{S_{\gamma,p,\alpha}(t)\}_{t\ge 0}$ is substochastic in $X_p$ and
for all $t\in[0, T]$ and some fixed $T>0$, classical solutions of \eqref{eq1.3} satisfy
\begin{equation}\label{eq3.2}
f(t) = S_{\gamma,p,\alpha}(t) f_0 +  \int_{0}^{t} S_{\gamma,p,\alpha} (t - \tau) F_{\gamma,\alpha} (f(\tau)) d\tau.
\end{equation}
We demonstrate that the integral equation \eqref{eq3.2} is locally solvable.

(b) The map $F_{\gamma,\alpha}:X_{p,\alpha} \to X_p$ is bounded and locally Lipschitz continuous provided  \eqref{eq3.1} holds. The argument here is the same as in the proof of \cite[Theorem 5.1]{Bana12b}, leading to
\begin{align}
\nonumber
\|F_{\gamma,\alpha}(f)\|_p &\le \gamma\|f\|_{p,\alpha}+
\sum_{j=1}^{\infty} |f_{j}| \sum_{i=1}^{\infty} i^p k_{i,j} |f_{i}|
+ \frac{1}{2} \sum_{j=1}^{\infty}  |f_j| \sum_{i=1}^{j-1} (i+j)^p k_{j,i} |f_{i}|\\
\label{eq3.3}
&\le \bigl(\gamma+2^{p+1}\kappa\|f\|_{p,\alpha}\bigr)\|f\|_{p,\alpha}
\end{align}
and
\begin{align}
\nonumber
\|F_{\gamma,\alpha}(f) - F_{\gamma,\alpha}(g)\|_p & \le \gamma\|f-g\|_{p,\alpha}
+\sum_{j=1}^{\infty}|f_{j} - g_{j}| \sum_{i=1}^{\infty} i^p k_{i,j} |f_i| \\
\label{eq3.4}
& \le \bigl(\gamma+2^{p+1}\kappa ( \|f\|_{p,\alpha} + \|g\|_{p,\alpha} ) \bigr) \|f-g\|_{p,\alpha}.
\end{align}
We use estimates \eqref{eq3.3} and \eqref{eq3.4} to show that the nonlinear map
\[
M(f) = S_{\gamma,p,\alpha}(t)f_0 + \int_0^t S_{\gamma,p,\alpha}(t-\tau) F_{\gamma,\alpha}(f(\tau)) d\tau,
\]
is a contraction in the closed ball $B_r(f^0) = \{f\, :\, \|f-f^0\|_{C([0,T], X_{p,\alpha})}\leq r\}$ with $0<r<1$ and
\begin{equation}\label{eq3.5}
0<T\le \Bigl(\dfrac{r(1-\alpha)}{2 (1+\omega_p+2^{p+3}\kappa)
c_{\alpha,p} (1+c_{0,p}\|f_0\|_\alpha)^2}\Bigr)^{\frac{1}{1-\alpha}}.
\end{equation}

(c) Let $f^0(t) = S_{\gamma,p,\alpha}(t)f_0$, $t\in [0, T].$ We show that $B_r(f^0)$ is invariant
under the action of $M$. Indeed, for any $f\in B_r(f^0)$
\begin{align*}
&\|f^0 - M(f)\|_{C([0,T], X_{p,\alpha})} \le \max_{0\le t\le T}\int_0^t
\|S_{\gamma,p,\alpha}(t-\tau)F_{\gamma,\alpha}(f(\tau))\|_{p,\alpha} d\tau\\
&\qquad\qquad= \frac{c_{\alpha,p} T^{1-\alpha}}{1-\alpha} \bigl[\gamma + 2^{p+1}\kappa
\|f\|_{C([0,T], X_{p,\alpha})}\bigr] \|f\|_{C([0,T], X_{p,\alpha})}\\
&\qquad\qquad\leq \frac{c_{\alpha,p} T^{1-\alpha}}{1-\alpha} (1+\omega_p+2^{p+2}\kappa)
(1+c_{0,p}\|f_0\|_{p,\alpha})^2\leq r,
\end{align*}
where we used the elementary inequality
\[
\|f\|_{C([0,T], X_{p,\alpha})} \le r + \|f^0\|_{C([0,T], X_{p,\alpha})} \le 1+c_{0,p}\|f_0\|_\alpha,
\]
combined with \eqref{eq2.4a}, \eqref{eq2.4b}, \eqref{eq3.3} and our definition of $\gamma$.
Furthermore, with the aid of \eqref{eq3.4}, \eqref{eq3.5},  in the same manner as above we have for $f,g\in B_r(f^0)$,
\begin{align*}
&\|M(f) - M(g)\|_{C([0,T], X_{p,\alpha})} \\
&\qquad\le
\frac{c_{\alpha,p} T^{1-\alpha}}{1-\alpha} (1+\omega_p+2^{p+3}\kappa)
(1+c_{0,p}\|f_0\|_{p,\alpha})^2 \|f-g\|_{C([0,T], X_{p,\alpha})} \\
&\qquad< \tfrac{1}{2} \|f-g\|_{C([0,T], X_{p,\alpha})}.
\end{align*}
Hence, $M:B_r(f^0)\to B_r(f^0)$ is a contraction and the classical Banach fixed point theorem yields a unique,
mild solution of \eqref{eq1.3} in $B_r(f^0)\subset C([0,T], X_{p,\alpha})$.

(d) To complete the proof we note that the maps $F_1$ and $F_2,$ defined in \eqref{F1F2},
are non-negative in $X_{p,\alpha}^{+}$. Assuming that $f \in B_{r} (f_0)^{+}$, we have
\begin{align*}
[F_2(f)]_i &= \sum_{j=1}^\infty k_{i,j}f_i f_j  \le 2 \kappa \| f \|_{p,\alpha}(1+ \theta_i)^\alpha f_i  \\
&\le 2\kappa(r+\| f_0 \|_{C([0,T],X_{p,\alpha})} )(1+ \theta_i)^\alpha f_i
\leq \gamma(1+\theta_i)^\alpha f_i
\end{align*}
and then $[F_{\gamma,\alpha}(f)]_i \ge 0$, $i\ge 1$.
The last inequality indicates that $B_r (f_0)^{+}$ is invariant under the action of the map $M$
and hence the local mild solution $f$ is non-negative.
\end{proof}

To proceed further, we make use of the following modification of the Gronwall inequality, sometimes called the singular Gronwall inequality, see e.g. \cite[Lemma 8.8.1]{Caz98}. Since wee need some specific aspects of it, we shall provide an elementary proof.
\begin{lemma}\label{lm3.2}
Let $u \in L_{\infty,loc}((0,T]) \cap L_1((0,T))$, $0 < T  < \infty $, be  a nonnegative function satisfying
\begin{equation}
u(t)\leq \frac{c}{t^\gamma} + c_1\int^{t}_0 u(\tau)(t-\tau)^{-\alpha}d\tau, \qquad t\in (0,T],
\label{gron1}
\end{equation}
where $\gamma<1,\, 0 < \alpha < 1$  and $c, c_1> 0$.  Then there is a constant $C(\gamma,\alpha,T),$ independent of $c$,  such that
\begin{equation}
u(t) \leq \frac{cC(\gamma,\alpha,T)}{t^\gamma}, \qquad t\in (0,T].
\label{gron2}
\end{equation}
\end{lemma}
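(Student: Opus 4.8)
The plan is to read \eqref{gron1} as a comparison estimate governed by the positive linear Volterra operator
$$(Bv)(t):=c_1\int_0^t v(\tau)\,(t-\tau)^{-\alpha}\,d\tau,$$
so that, putting $\phi(t):=c\,t^{-\gamma}$, the hypothesis becomes $0\le u(t)\le \phi(t)+(Bu)(t)$ on $(0,T]$. First I would record a few elementary facts about $B$: since $\alpha<1$ the kernel $t\mapsto t^{-\alpha}$ lies in $L_1(0,T)$, so (splitting the integral near $0$ and near $t$) $B$ maps $L_1((0,T))\cap L_{\infty,loc}((0,T])$ into itself, it is monotone, and it sends $t\mapsto t^{a}$, $a>-1$, to a multiple of $t^{a+1-\alpha}$ by the Beta integral; in particular all iterates $B^k u$ are finite on $(0,T]$. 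Iterating the inequality $n$ times and using positivity of $B$ then gives
$$u(t)\le \sum_{k=0}^{n-1}(B^k\phi)(t)+(B^n u)(t),\qquad t\in(0,T],\ n\ge1,$$
and the proof reduces to summing the series and showing that the remainder tends to $0$.

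Next, with $\beta:=1-\alpha\in(0,1)$ and using $\int_0^t\tau^{a-1}(t-\tau)^{b-1}\,d\tau=\Gamma(a)\Gamma(b)\Gamma(a+b)^{-1}t^{a+b-1}$ — legitimate at each step because $\gamma<1$ and $\alpha<1$ keep both exponents strictly above $-1$ — a routine induction gives
$$(B^k\phi)(t)=c\,\Gamma(1-\gamma)\,\frac{\bigl(c_1\Gamma(\beta)\bigr)^k}{\Gamma(k\beta+1-\gamma)}\,t^{k\beta-\gamma},\qquad k\ge0,$$
so that $\sum_{k\ge0}(B^k\phi)(t)=c\,t^{-\gamma}\,\Gamma(1-\gamma)\,E_{\beta,\,1-\gamma}\!\bigl(c_1\Gamma(\beta)\,t^{\beta}\bigr)$, where $E_{\beta,\delta}(z)=\sum_{k\ge0}z^k/\Gamma(k\beta+\delta)$ is of Mittag-Leffler type. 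Because $\Gamma((k+1)\beta+\delta)/\Gamma(k\beta+\delta)\to\infty$, this power series converges for every $z\ge0$ and has positive coefficients, hence is finite and nondecreasing in $z$; evaluating at $z\le c_1\Gamma(\beta)T^{\beta}$ gives $\sum_{k\ge0}(B^k\phi)(t)\le c\,C\,t^{-\gamma}$ on $(0,T]$ with $C:=\Gamma(1-\gamma)\,E_{\beta,1-\gamma}\!\bigl(c_1\Gamma(\beta)T^{\beta}\bigr)$, a constant depending only on $\gamma,\alpha,c_1,T$ and, crucially, not on $c$.

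The hard part will be disposing of the remainder, and this is where the $L_1$-hypothesis on $u$ is used. The $n$-fold (one-sided) convolution of the kernel with itself is, again by the Beta integral, $\bigl(t^{-\alpha}\bigr)^{*n}(s)=\Gamma(\beta)^n\Gamma(n\beta)^{-1}s^{n\beta-1}$; for $n\ge n_0$ with $n_0\beta\ge1$ this is a bounded function on $[0,T]$, with $L_\infty$-norm $\Gamma(\beta)^n\Gamma(n\beta)^{-1}T^{n\beta-1}$. Since $(B^n u)(t)=c_1^n\int_0^t\bigl(t^{-\alpha}\bigr)^{*n}(t-\tau)\,u(\tau)\,d\tau$, it follows for $n\ge n_0$ that
$$0\le (B^n u)(t)\le \frac{\|u\|_{L_1((0,T))}}{T}\cdot\frac{\bigl(c_1\Gamma(\beta)T^{\beta}\bigr)^{n}}{\Gamma(n\beta)}\ \xrightarrow[n\to\infty]{}\ 0,$$
uniformly on $(0,T]$, the right-hand side being (a shift of) the general term of a convergent Mittag-Leffler series. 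Letting $n\to\infty$ in the iterated inequality, the partial sums of $\sum_k(B^k\phi)(t)$ increase to their finite limit while $(B^n u)(t)\to0$, so $u(t)\le \sum_{k\ge0}(B^k\phi)(t)\le c\,C(\gamma,\alpha,T)\,t^{-\gamma}$ on $(0,T]$, which is \eqref{gron2}. The only genuinely delicate point is verifying that $(t^{-\alpha})^{*n}$ becomes a bounded kernel once $n$ is large and that $u\in L_1$ is exactly what turns this into the decay of $B^n u$; the local boundedness of $u$ is needed only beforehand, to make the iteration legitimate, and everything else is bookkeeping with Gamma functions.
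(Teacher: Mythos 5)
Your proof is correct, but it follows a genuinely different route from the paper's. The paper iterates the inequality by substituting it into itself, which at each stage replaces the kernel exponent $-\alpha$ by $1-2\alpha$, then $2^2(1-\alpha)-1$, and so on (effectively squaring the Volterra operator each time), stopping after finitely many steps once $2^k(1-\alpha)-1\ge 0$ makes the kernel bounded; at that point the estimate reduces to the textbook Gronwall inequality with the integrable majorant $\Theta$, and \eqref{WLconv1} converts the accumulated convolutions $\theta_\gamma\ast\theta_{r\alpha-r+1}$ back into $t^{-\gamma}$. You instead sum the full Neumann series $\sum_k B^k\phi$ — this is the classical Henry-type singular Gronwall argument — identify the sum as $c\,t^{-\gamma}\Gamma(1-\gamma)E_{1-\alpha,1-\gamma}\bigl(c_1\Gamma(1-\alpha)t^{1-\alpha}\bigr)$, and dispose of the remainder $B^nu$ directly via the explicit $n$-fold convolution kernel $\Gamma(1-\alpha)^n\Gamma(n(1-\alpha))^{-1}s^{n(1-\alpha)-1}$ together with the $L_1$ hypothesis on $u$. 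Both arguments use the same Beta-function identity and both need $u\in L_1\cap L_{\infty,loc}$ for essentially the same reason (to make the iteration legitimate and to kill the tail once the kernel is bounded). What your version buys is an explicit, clean constant $C=\Gamma(1-\gamma)E_{1-\alpha,1-\gamma}\bigl(c_1\Gamma(1-\alpha)T^{1-\alpha}\bigr)$ and no case distinction on the sign of $1-2\alpha$; what the paper's version buys is that it never needs the convergence of the Mittag--Leffler series and lands on the elementary Gronwall lemma after finitely many steps. One cosmetic remark: your constant visibly depends on $c_1$ as well as on $\gamma,\alpha,T$ (as does the paper's, despite the notation $C(\gamma,\alpha,T)$); the only independence the lemma actually asserts and uses is independence of $c$, which your argument delivers.
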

\proof First we observe that, for any $\beta<1, \delta<1$ and $a<b<\infty $, we have
\begin{align}
\int_a^b(b-t)^{-\beta}(t-a)^{-\delta} d t &= (b-a)^{-\beta-\delta+1} \int^{1}_{0}(1-v)^{-\beta}v^{-\delta}d v \nonumber \\&=  B(1-\beta, 1-\delta)(b-a)^{-\beta-\delta+1},
\label{gron3}
\end{align}
where $B$ is the beta function. Since $u$ satisfies \eqref{gron1}, it follows from \eqref{gron3} that
\begin{align}
\int_0^{t}\frac{u(\tau)}{(t - \tau)}^{\alpha} d \tau &\leq c\int_0^{t}\frac{1}{\tau^{\gamma}(t - \tau)^{\alpha}}d \tau  + c_1\int_0^{t}\frac{1}{(t - \tau)^{\alpha}}\left(\int^{\tau}_0 \frac{u(s)}{(\tau - s)^{\alpha}}d s\right) d \tau \nonumber\\
& = c(\theta_\gamma\ast \theta_{\alpha})(t) + c_{1,\alpha}\int_0^{t} u(s) (t - s)^{1-2\alpha} d s,
\label{gron4}
\end{align}
where $\ast$ denotes the Laplace convolution, $\theta_{\kappa}(t) = t^{-\kappa}$ and  $c_{1,\alpha} = c_1B(1-\alpha,1-\alpha)$.  Inserting \eqref{gron4} into \eqref{gron1}, we obtain
\begin{equation}
u(t)\leq c\theta_\gamma(t) + cc_1(\theta_\gamma\ast \theta_{\alpha})(t) + c_{2,\alpha}\int_0^{t} u(\tau) (t-\tau)^{1-2\alpha} d\tau, \qquad t\in (0,T],
\label{gron5}
\end{equation}
with $c_{2,\alpha} = c_1\,c_{1,\alpha}$. Note that the convolution $\theta_\gamma\ast \theta_{\kappa}$ exists for any choice of $\gamma < 1$ and $\kappa < 1$, since
\begin{equation}\label{WLconv}
(\theta_{\gamma}\ast \theta_{\kappa})(t)  = B(1-\gamma,1-\kappa)\,t^{1-\gamma-\kappa} = B(1-\gamma,1-\kappa)\,\theta_{\gamma + \kappa - 1}(t).
\end{equation}
Furthermore,
\begin{equation}\label{WLconv1}
(\theta_{\gamma}\ast \theta_{\kappa})(t)  \leq \frac{\bar C(\gamma, \kappa, T)}{t^\gamma}, \qquad t \in (0,T],
\end{equation}
where $\bar C(\gamma, \kappa, T)$ is a positive constant.

If $1-2\alpha\geq 0$, then we can infer from \eqref{gron5} and \eqref{WLconv1} that
\[
u(t) \leq \frac{c(1 + \bar C(\gamma, \alpha, T))}{t^\gamma} + c_{2,\alpha}t^{1-2\alpha}\int_0^t u(\tau) d \tau,
\]
and then apply the standard arguments used to establish Gronwall-type inequalities  to obtain the desired result. Otherwise, we repeat the above process inductively, using \eqref{WLconv} and \eqref{gron3},  until we arrive at
\begin{equation}
u(t)\leq c\Theta(t) + c^{(k)}_{2^k,\alpha}(u\ast \theta_{2^k\alpha-2^k+1})(t),
\label{gron8}
\end{equation}
where $k \in \mathbb{N}$ is such that $2^k(1-\alpha) - 1 \geq 0$,
\[
\Theta(t) = \theta_\gamma(t) + \sum\limits_{r=1}^{2^k-1} c^{(k)}_{r,\alpha} (\theta_\gamma\ast \theta_{r\alpha-r+1})(t),
\]
and  each $c^{(k)}_{r,\alpha},\ r = 1,2,\ldots k,$ is a positive constant, independent of $c$. Hence,
\begin{equation} \label{WLg1}
u(t)\leq c\Theta(t) + c^{(k)}_{2^k,\alpha} t^{2^{k}(1-\alpha) -1} \int_0^t u(\tau)d \tau.
\end{equation}
Since by $\alpha<1$, we have  $r\alpha-r+1<1$ for any $r\geq 1$, from \eqref{WLconv1} we infer that there is a constant $C_1(\gamma,\alpha,T) > 0$ such that
$$
\Theta(t)\leq \frac{C_1(\gamma,\alpha,T)}{t^\gamma}
$$
and $\Theta$ is integrable on $[0,T]$. Hence a routine argument leads to
$$
u(t) \leq \frac{cC(\gamma,\alpha,T)}{t^\gamma},
$$
for some constant $C(\gamma,\alpha,T)$ and this gives \eqref{gron2}. \hfill\qed

To simplify the notation, in the calculations below, we employ symbol $c$ to denote
a positive constant whose particular value is irrelevant.
\begin{lemma}\label{lm3.3}
Under assumptions of Lemma~\ref{lm3.1}, the mild solution $f$ is H\"older continuous
with exponent $1-\alpha$, i.e.
\begin{equation}\label{eq3.7}
\| f(t+h) - f(t) \|_{p,\alpha} \le \frac{ch^{1-\alpha}}{t^{1-\alpha}},
\end{equation}
for all $t\in (0,T]$ and some $c>0$.
\end{lemma}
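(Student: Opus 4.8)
The plan is to exploit the variation-of-constants representation \eqref{eq3.2} together with the smoothing estimates \eqref{eq2.4} for the semigroup $\{S_{\gamma,p,\alpha}(t)\}_{t\ge0}$, the boundedness bound \eqref{eq3.3} for the nonlinearity, and the singular Gronwall inequality of Lemma~\ref{lm3.2}. First I would split the increment as
\begin{align*}
f(t+h)-f(t) &= \bigl(S_{\gamma,p,\alpha}(h)-I\bigr)S_{\gamma,p,\alpha}(t)f_0
 + \int_0^t \bigl(S_{\gamma,p,\alpha}(h)-I\bigr)S_{\gamma,p,\alpha}(t-\tau)F_{\gamma,\alpha}(f(\tau))\,d\tau\\
 &\quad + \int_t^{t+h} S_{\gamma,p,\alpha}(t+h-\tau)F_{\gamma,\alpha}(f(\tau))\,d\tau,
\end{align*}
and estimate the three pieces in $\|\cdot\|_{p,\alpha}$ separately. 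The last integral is the easy one: by \eqref{eq2.4a}, \eqref{eq3.3}, and the fact that $f$ is bounded on $[0,T]$ in $X_{p,\alpha}$ (from Lemma~\ref{lm3.1}), it is bounded by $c\int_t^{t+h}(t+h-\tau)^{-\alpha}d\tau = c\,h^{1-\alpha}/(1-\alpha)$, which is even better than the claimed bound since it has no negative power of $t$.

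For the first term I would use the standard analytic-semigroup estimate $\|(S_{\gamma,p,\alpha}(h)-I)x\|_{p,\alpha}\le c\,h^{1-\alpha}\|x\|_{p,1}$ valid for $x\in X_{p,1}$ (obtained by writing $S(h)x-x=\int_0^h Y_{\gamma,p,\alpha}S(\sigma)x\,d\sigma$ and using \eqref{eq2.4b}, \eqref{eq2.4c}), applied with $x=S_{\gamma,p,\alpha}(t)f_0$; since $\|S_{\gamma,p,\alpha}(t)f_0\|_{p,1}\le (c_{1,p}/t)e^{\omega_p t}\|f_0\|_{p,\alpha}$ — wait, more carefully, one has $\|S_{\gamma,p,\alpha}(t)f_0\|_{p,1}\le c\,t^{-1}\|f_0\|_p\le c\,t^{-1}\|f_0\|_{p,\alpha}$ — this gives a bound $c\,h^{1-\alpha}t^{-1}$, which is worse than $h^{1-\alpha}t^{-(1-\alpha)}$ near $t=0$. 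To fix this I would instead bound $\|(S_{\gamma,p,\alpha}(h)-I)S_{\gamma,p,\alpha}(t)f_0\|_{p,\alpha}$ by interpolating: write $S(h)-I$ acting from $X_{p,\alpha}$ to $X_{p,\alpha}$ as bounded (norm $\le c$) and from $X_{p,1}$ to $X_{p,\alpha}$ with norm $\le c\,h^{1-\alpha}$, then use $\|S(t)f_0\|_{p,\alpha}\le c\|f_0\|_{p,\alpha}$ together with an estimate of the form $\|(S(h)-I)S(t)f_0\|_{p,\alpha}\le c\,(h/t)^{1-\alpha}\|f_0\|_{p,\alpha}$ which is the classical bound for Hölder continuity of analytic semigroups in the starting space — derived by splitting $\|(S(h)-I)S(t)f_0\|_{p,\alpha}\le\int_0^h\|Y S(\sigma+t)f_0\|_{p,\alpha}d\sigma$ and using $\|YS(\sigma+t)f_0\|_{p,\alpha}\le c(\sigma+t)^{-1}\|f_0\|_{p,\alpha}$, then $\int_0^h(\sigma+t)^{-1}d\sigma=\ln(1+h/t)\le c(h/t)^{1-\alpha}$ for $h\le T$. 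This yields the desired $c\,h^{1-\alpha}t^{-(1-\alpha)}$.

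The middle term is where the Gronwall argument enters. Using the same analytic-smoothing bound, $\|(S_{\gamma,p,\alpha}(h)-I)S_{\gamma,p,\alpha}(t-\tau)F_{\gamma,\alpha}(f(\tau))\|_{p,\alpha}\le c\,h^{1-\alpha}(t-\tau)^{-1}\|F_{\gamma,\alpha}(f(\tau))\|_p$; but $(t-\tau)^{-1}$ is not integrable, so instead I would use the sharper interpolation bound $\|(S(h)-I)S(t-\tau)x\|_{p,\alpha}\le c\,h^{1-\alpha}(t-\tau)^{-1+(1-\alpha)}\cdot(t-\tau)^{-\alpha}$... more directly: $\|(S(h)-I)S(t-\tau)x\|_{p,\alpha}\le c\min\{(t-\tau)^{-\alpha},\,h^{1-\alpha}(t-\tau)^{-1}\}\|x\|_p\le c\,h^{1-\alpha}(t-\tau)^{-1}\|x\|_p$ when $h\le t-\tau$, and $\le c(t-\tau)^{-\alpha}\|x\|_p\le c\,h^{-\alpha}(t-\tau)^{-\alpha}\cdot h^{... }$; the clean way is the elementary inequality $\min\{a^{-\alpha},h^{1-\alpha}a^{-1}\}\le h^{1-\alpha}a^{-1}$ is false for small $a$, so one uses $\min\{a^{-\alpha},h^{1-\alpha}a^{-1}\}\le a^{-1}h^{1-\alpha}$ only for $a\ge h$ and $\le a^{-\alpha}\le h^{-\alpha}\cdot a^{-\alpha}\cdot h^{\alpha}$... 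I will simply invoke $\|(S(h)-I)S(s)x\|_{p,\alpha}\le c\,h^{1-\alpha}s^{-1}\|x\|_{p}$ together with the competing $\le c\,s^{-\alpha}\|x\|_p$, interpolate to get $\le c\,h^{\mu(1-\alpha)}s^{-\alpha-\mu(1-\alpha)}\|x\|_p$ for any $\mu\in[0,1]$, and pick $\mu$ so that the exponent of $h$ equals $1-\alpha$ while the singularity $s^{-1+\alpha(1-\mu)... }$ stays integrable — this forces $\mu=1$ hence exponent $-\alpha-(1-\alpha)=-1$, non-integrable, so this direct route fails and one genuinely needs Gronwall. The correct move: keep the bound in the form $\|f(t+h)-f(t)\|_{p,\alpha}\le c\,h^{1-\alpha}t^{-(1-\alpha)} + c\int_0^t (t-\tau)^{-\alpha}\|f(\tau+h)-f(\tau)\|_{p,\alpha}\cdot(\text{Lip const})\,d\tau$, i.e. re-use \eqref{eq3.4} to convert $F_{\gamma,\alpha}(f(\tau+\cdot))-F_{\gamma,\alpha}(f(\tau))$ into a factor times $\|f(\tau+h)-f(\tau)\|_{p,\alpha}$ — but the shifted integral needs a change of variables and a split at $\tau=h$. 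Concretely, comparing the Volterra identity at $t+h$ and $t$, the increment $w(t):=\|f(t+h)-f(t)\|_{p,\alpha}$ satisfies $w(t)\le \Phi(t) + c\int_0^t(t-\tau)^{-\alpha}w(\tau)\,d\tau$ where $\Phi(t)$ collects the first-term bound $c\,h^{1-\alpha}t^{-(1-\alpha)}$, the last-integral bound $c\,h^{1-\alpha}$, and the contribution from $\int_0^h S_{\gamma,p,\alpha}(t+h-\tau)F_{\gamma,\alpha}(f(\tau))\,d\tau$ near the initial time which is $\le c\int_0^h(t+h-\tau)^{-\alpha}d\tau\le c\,h\,t^{-\alpha}\le c\,h^{1-\alpha}t^{-(1-\alpha)}$ since $h\le T$ and we may assume $t\le T$ — actually $h^{1-\alpha}\cdot t^{-(1-\alpha)}$ dominates $h\,t^{-\alpha}$ when $h\le t$, and when $h>t$ one argues directly. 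Thus $\Phi(t)\le c\,h^{1-\alpha}t^{-(1-\alpha)}$, and applying Lemma~\ref{lm3.2} with $\gamma=1-\alpha<1$ and constant $c\,h^{1-\alpha}$ in place of $c$ yields $w(t)\le c\,h^{1-\alpha}\,C(1-\alpha,\alpha,T)\,t^{-(1-\alpha)}$, which is exactly \eqref{eq3.7}.

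The main obstacle is bookkeeping the shifted Volterra integral so that the inhomogeneous term $\Phi(t)$ has exactly the singularity $t^{-(1-\alpha)}$ and no worse — in particular handling the range $\tau\in(0,h)$ where $f(\tau+h)-f(\tau)$ cannot be controlled by the Lipschitz estimate against $w$, and making sure the $h^{1-\alpha}$ factor (rather than $h$ or $h^{1-\alpha}\ln(1/h)$) is what comes out of the analytic-semigroup increment estimate. Once $\Phi(t)\le c\,h^{1-\alpha}t^{-(1-\alpha)}$ is established, Lemma~\ref{lm3.2} does the rest mechanically, with the key point (already emphasized in that lemma) that the output constant is independent of the prefactor, so the $h^{1-\alpha}$ is preserved.
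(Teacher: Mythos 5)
After discarding your own flagged false starts, the argument you settle on coincides with the paper's: compare the two Volterra identities by shifting the integration variable so that the difference falls on the argument of $F_{\gamma,\alpha}$, use the Lipschitz bound \eqref{eq3.4} together with the smoothing estimate \eqref{eq2.4a} to arrive at $w(t)\le c\,h^{1-\alpha}t^{-(1-\alpha)}+c\int_0^t(t-\tau)^{-\alpha}w(\tau)\,d\tau$, and close with Lemma~\ref{lm3.2} (with $\gamma=1-\alpha$), whose output constant is independent of the prefactor so that the factor $h^{1-\alpha}$ survives. The only real deviation is the free-evolution term, where the paper writes $S_{\gamma,p,\alpha}(t+h)f_0-S_{\gamma,p,\alpha}(t)f_0=\int_0^h S_{\gamma,p,\alpha}(\tau)\,Y_{\gamma,p,\alpha}S_{\gamma,p,\alpha}(t)f_0\,d\tau$ and combines \eqref{eq2.4a} with \eqref{eq2.4c} to get $c\,h^{1-\alpha}t^{-(1-\alpha)}$ in one line, whereas your route through $\|Y_{\gamma,p,\alpha}S_{\gamma,p,\alpha}(t+\sigma)f_0\|_{p,\alpha}\le c(t+\sigma)^{-1}\|f_0\|_{p,\alpha}$ and $\ln(1+h/t)\le C_\alpha(h/t)^{1-\alpha}$ also works but needs the extra (easily justified) bound on $Y_{\gamma,p,\alpha}S_{\gamma,p,\alpha}(s)$ in $X_{p,\alpha}$.
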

\begin{proof}
By virtue of \eqref{eq3.2}, we have
\begin{equation*}
\begin{aligned}
\| f(t+h) - f(t) \|_{p,\alpha} &\le \|S_{\gamma,p,\alpha} (t+h) - S_{\gamma,p,\alpha} (t) ) f_0 \|_{p,\alpha} \\
&+ \Bigl\| \int_{t}^{t+h} S_{\gamma,p,\alpha} (t + h - \tau) F_{\gamma,\alpha} (f(\tau)) d\tau \Bigr\|_{p,\alpha}   \\
&+ \Bigl\| \int_{0}^{t} S_{\gamma,p,\alpha} (\tau) \Bigl(F_{\gamma,\alpha} (f(t+h-\tau))
- F_{\gamma,\alpha} (f(t-\tau))\Bigr) d\tau \Bigr\|_{p,\alpha} \\
&=: J_1 + J_2 + J_3.
\end{aligned}
\end{equation*}
First we infer, by \eqref{eq2.4a} and \eqref{eq2.4c},
\begin{align*}
J_1 &\le \int_0^h \Bigl\|S_{\gamma,p,\alpha} (\tau)\bigl[Y_{\gamma,p,\alpha}
S_{\gamma,p,\alpha} (t)  f_0\bigr]\Bigr\|_{p,\alpha} d\tau
\le c\|S_{\gamma,p,\alpha} (t)  f_0\|_{p,1}\int_0^h \frac{d\tau}{\tau^\alpha}\\
&\le \frac{c h^{1-\alpha}}{t^{1-\alpha}}\|f_0\|_{p,\alpha}
\le \frac{c h^{1-\alpha}}{t^{1-\alpha}}.
\end{align*}
For $J_2$ and $J_3$, in the same manner as in part (c) of Lemma~\ref{lm3.1}, we obtain
\begin{align*}
J_2 &\le  c h^{1-\alpha}\|f\|_{C([0,T], X_{p,\alpha})}^2, \quad J_3 \le c\int_{0}^{t} \|f(\tau+h)-f(\tau)\|_{p,\alpha} \frac{d\tau}{(t-\tau)^\alpha}.
\end{align*}
Combining the estimates, we get
\begin{align*}
\| f(t+h) - f(t) \|_{p,\alpha} \le \frac{ch^{1-\alpha}}{t^{1-\alpha}}
+ c\int_{0}^{t} \|f(\tau+h)-f(\tau)\|_{p,\alpha} \frac{d\tau}{(t-\tau)^\alpha}.
\end{align*}
Hence, the bound \eqref{eq3.7}, with a constant $c>0$ that depends on $\alpha$, $T$ and the initial data $f_0$ only,
follows directly from Lemma~\ref{lm3.2} with $\gamma = 1-\alpha$.
\end{proof}

Lemmas~\ref{lm3.1} and \ref{lm3.3}, combined together, yield
\begin{theorem}\label{th3.4}
Assume that conditions \eqref{eq2.1} and \eqref{eq3.1} are satisfied. Then, for each
$f_0 \in X_{p,\alpha}$ there is  $T = T(f_0) > 0$ such that the initial value problem \eqref{eq1.3} has a unique
non-negative classical solution $f \in C([0, T], X_{p,\alpha}) \cap C^{1}((0, T), X_p) \cap C((0, T), X_{p,1})$.
\end{theorem}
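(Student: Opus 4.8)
The plan is to bootstrap from the mild solution obtained in Lemma~\ref{lm3.1} to a classical solution by exploiting the Hölder regularity from Lemma~\ref{lm3.3} together with the smoothing properties of the analytic semigroup $\{S_{\gamma,p,\alpha}(t)\}_{t\ge0}$ recorded in \eqref{eq2.4}. The core mechanism is the standard parabolic regularity statement (e.g. \cite[Corollary 4.3.3]{Pa} or \cite[Theorem II.4.6]{Engel2000}): if $g\in C^{\eta}([0,T],X_p)$ is locally Hölder continuous, then $v(t)=\int_0^t S_{\gamma,p,\alpha}(t-\tau)g(\tau)\,d\tau$ is continuously differentiable on $(0,T)$ with values in $X_p$, takes values in $D(Y_{\gamma,p,\alpha})=D(T_p)$, and satisfies $v'(t)=Y_{\gamma,p,\alpha}v(t)+g(t)$. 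Applying this with $g(\tau)=F_{\gamma,\alpha}(f(\tau))$ is exactly what turns the Volterra equation \eqref{eq3.2} into the differential equation \eqref{eq1.3}.

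First I would verify that $g(\tau):=F_{\gamma,\alpha}(f(\tau))$ is locally Hölder continuous on $(0,T]$ as an $X_p$-valued map. This follows by combining the local Lipschitz estimate \eqref{eq3.4} for $F_{\gamma,\alpha}:X_{p,\alpha}\to X_p$ with the Hölder bound \eqref{eq3.7} from Lemma~\ref{lm3.3}: since $f$ is bounded in $C([0,T],X_{p,\alpha})$ and Hölder-$(1-\alpha)$ away from $t=0$, we get $\|g(t+h)-g(t)\|_p\le c\,(\|f\|_{C([0,T],X_{p,\alpha})})\,\|f(t+h)-f(t)\|_{p,\alpha}\le c\,h^{1-\alpha}t^{\alpha-1}$ for $t\in(0,T]$. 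Thus on any compact subinterval $[\varepsilon,T]$ the map $g$ is uniformly Hölder continuous. Second, I would invoke the regularity theorem above on $[\varepsilon,T]$ for each $\varepsilon>0$; since $\varepsilon$ is arbitrary this yields $f\in C^1((0,T),X_p)$ and $f(t)\in D(T_p)$ for $t\in(0,T)$, with \eqref{eq1.3} holding pointwise there. Third, to obtain the claimed continuity $f\in C((0,T),X_{p,1})$, I would note that $f(t)\in D(T_p)$ with $\|f(t)\|_{p,1}=\|(1+\theta)f(t)\|_p$, and that $t\mapsto Y_{\gamma,p,\alpha}f(t)=f'(t)-g(t)$ is continuous into $X_p$; since $T_{p}$ (equivalently multiplication by $(1+\theta)$) is, modulo the bounded perturbations $G_p,D_p,B_p$ and the bounded operator $\gamma T_{p,\alpha}$, the dominant part of $Y_{\gamma,p,\alpha}$, continuity of $Y_{\gamma,p,\alpha}f(\cdot)$ and of $f(\cdot)$ in $X_p$ forces continuity of $(1+\theta)f(\cdot)$ in $X_p$, i.e. $f\in C((0,T),X_{p,1})$. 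Continuity $f\in C([0,T],X_{p,\alpha})$ and non-negativity are already supplied by Lemma~\ref{lm3.1}; uniqueness holds because any classical solution is a mild solution and mild solutions are unique by the contraction argument in Lemma~\ref{lm3.1}(c).

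The main obstacle, and the only place requiring genuine care, is the passage from the integral equation \eqref{eq3.2} to the differential equation \eqref{eq1.3} — that is, justifying differentiability of the convolution term and the identity $v'(t)=Y_{\gamma,p,\alpha}v(t)+g(t)$ with the correct domain membership. This is where the analyticity of the semigroup and the Hölder continuity of $g$ are both essential: mere continuity of $g$ would not suffice. One must also handle the fact that the Hölder estimate \eqref{eq3.7} degenerates as $t\to0^+$, which is precisely why classical regularity is asserted only on the open interval $(0,T)$ and why the argument is localized to $[\varepsilon,T]$. The remaining verifications — Hölder continuity of $g$, the domain/continuity bookkeeping for $X_{p,1}$, and uniqueness — are routine once the regularity theorem is in place, so I would keep those brief and refer to \cite[Section 4.3]{Pa} and \cite{Bana12b} for the analogous details.
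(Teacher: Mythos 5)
Your proposal is correct and follows essentially the same route as the paper: both arguments bootstrap the mild solution of Lemma~\ref{lm3.1} to a classical one via the H\"older estimate of Lemma~\ref{lm3.3}, the local Lipschitz property \eqref{eq3.4} of $F_{\gamma,\alpha}$, and the smoothing of the analytic semigroup; the paper merely carries out the Pazy-type difference-quotient computation explicitly (because some of the intermediate bounds, e.g. \eqref{SYest} and \eqref{ests2}, are reused in Section~\ref{secNum}), whereas you invoke the abstract regularity theorem as a black box.
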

\begin{proof}
First we prove the differentiability of $f$ in $X_p$ for $t>0$. From  \eqref{eq3.2},
\begin{align*}
& \frac{f(t+h) - f(t)}{h}\\
& =   \frac{S_{\gamma,p,\alpha} (h) - I}{h} S_{\gamma,p,\alpha} (t)f_0 + \frac{1}{h} \int_{t}^{t+h} S_{\gamma,p,\alpha} (t + h - \tau) F_{\gamma,\alpha} (f(\tau)) d\tau \\
& + \frac{1}{h} \int_{0}^{t} \bigg (S_{\gamma,p,\alpha} (t + h - \tau)
- S_{\gamma,p,\alpha}(t - \tau) \bigg ) F_{\gamma,\alpha} (f(\tau)) d\tau := I_1 + I_2 + I_3.
\end{align*}
We observe that, by the analyticity, $S_{\gamma,p,\alpha}(t)f_0\in D(T_p)$ for $t>0$, so that
$$\lim_{h\to\infty} I_1 = Y_{\gamma,p,\alpha} S_{\gamma,p,\alpha} (t)f_0$$ in $X_p.$
By \eqref{eq2.4c} we have
\begin{equation}
\| Y_{\gamma,p,\alpha} S_{\gamma,p,\alpha} (t) f_0 \|_{p}
\le  \| S_{\gamma,p,\alpha}(t) f_0 \|_{p,1} \le \frac{c}{t^{1-\alpha}} \| f_0 \|_{p,\alpha}.
\label{SYest}
\end{equation}
The strong continuity of $\{S_{\gamma,p,\alpha}(t)\}_{t\ge 0}$, the continuity of $f$ (see Lemma~\ref{lm3.1}) and
estimates \eqref{eq3.3}, \eqref{eq3.4} combined together, show that in $X_p$
$$\lim_{h\to 0} I_2 = F_{\gamma,\alpha}(f(t)).$$
To find $\lim_{h\to0}I_3$, we first show that
\[
\Bigl\| \int_{0}^{t} Y_{\gamma,p,\alpha} S_{\gamma,p,\alpha} (t - \tau)
F_{\gamma,\alpha}(f(\tau)) d\tau \Bigr\|_{p}  < \infty.
\]
By \eqref{eq3.7}, we have
\begin{align}
&\Bigl\| \int_{0}^{t} Y_{\gamma,p,\alpha} S_{\gamma,p,\alpha} (t - \tau)
F_{\gamma,\alpha}(f(\tau)) d\tau \Bigr\|_{p} \nonumber\\
&\le \int_{0}^{t} \|Y_{\gamma,p} S_{\gamma,p,\alpha} (t - \tau)
( F_{\gamma,\alpha}(f(\tau))- F_{\gamma,\alpha}(f(t)) ) \|_{p} d\tau\nonumber\\
&\qquad+  \Bigl\| \int_{0}^{t} Y_{\gamma,p,\alpha} S_{\gamma,p,\alpha} (t - \tau)
F_{\gamma,\alpha}(f(t)) d\tau\Bigr\|_p\nonumber\\
& \le c \int_{0}^{t} \|S_{\gamma,p,\alpha} (t - \tau)
( F_{\gamma,\alpha}(f(\tau)) - F_{\gamma,\alpha}(f(t)) ) \|_{p,1} d\tau\nonumber\\
&\qquad+  \| (S_{\gamma,p,\alpha} (t) - I) F_{\gamma,\alpha}(f(t))\|_p\nonumber\\
& \le c\int_{0}^{t} \frac{1}{t-\tau}  \|f(\tau) - f(t)\|_{p,\alpha} d\tau+ c \le c t^{\alpha-1}\int_0^t (t-\tau)^{-\alpha}d\tau + c\le c,\label{ests2}
\end{align}
where $c>0$ depends on $t>0$, $\alpha$, $T$, $\kappa$, constants that appear in \eqref{eq2.4} and
the initial data $f_0$. Thus,
$$
\lim_{h\to 0} I_3 = \int_{0}^{t} Y_{\gamma,p,\alpha} S_{\gamma,p,\alpha} (t - \tau)
F_{\gamma,\alpha}(f(\tau)) d\tau.
$$
Combining all our calculations, we conclude that $\frac{df}{dt}\in X_p$ for any $t>0$ and the continuity of each of the above limits shows that $f\in C^1((0,T), X_p)$ is a classical solution.
The same calculations demonstrate also that $f\in C((0,T), X_{p,1})$. \end{proof}

\begin{remark} \label{remB}
The calculations presented above, in particular \eqref{SYest} and the last but one inequality in \eqref{ests2}, show that
$\|Y_{p}f(t)\|_{p} \le \frac{c}{t^{1-\alpha}}$, $t>0,$
provided $f_0 \in X_{p,\alpha}$.
Since the graph norm $\|\cdot\|_p + \|Y_{p}\cdot\|_{p}$ and $\|\cdot\|_{p,1}$ are equivalent in
$X_p \cap D(T_p)$ (as $D(T_p) = D(Y_p)$ and both operators are closed),
it follows that
\begin{equation}
\|f(t)\|_{p,1} \le \frac{c}{t^{1-\alpha}}, \quad t>0,
\label{Ypest}
\end{equation}
 for $f_0\in X_{p,\alpha}$ and hence $\|f\|_{L^1([0,T], X_{p,1})}<\infty$. The last fact is crucial for the numerical analysis presented
in Section~4.
\end{remark}

\subsection{Global non-negative solutions}
Below we show that classical solutions of \eqref{eq1.3} emanating from non-negative initial data are globally defined.
Our analysis requires the following elementary observation.
\begin{lemma}\label{lm3.5}
Assume that $f_0\in X_{p,\alpha}^+$ and for some $\omega_1$
\begin{equation}\label{eq3.8}
\frac{g_i-d_i}{i} -s_i \leq \omega_1
\end{equation}
Then, under the assumptions of Theorem \ref{th3.4}, the local solution satisfies
\begin{equation}\label{eq3.9}
\|f\|_{1} \le e^{\omega_1 t}\|f_0\|_1, \qquad t \in (0, T(f_0)).
\end{equation}
\end{lemma}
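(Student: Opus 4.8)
The plan is to derive a differential inequality for the mass functional $t\mapsto\|f(t)\|_1$ and then apply Gronwall's lemma. Since $f$ is non-negative by Theorem~\ref{th3.4}, we have $\|f(t)\|_1=\sum_{i\ge1} i f_i(t)=\langle w,f(t)\rangle$, where $w=(i)_{i\ge1}$ defines a bounded linear functional of norm one on $X_1$; and as $\|g\|_1\le\|g\|_p$ for $p>1$, the embedding $X_p\hookrightarrow X_1$ is continuous, so $f\in C^1((0,T),X_p)\subset C^1((0,T),X_1)$ and $t\mapsto\|f(t)\|_1$ is $C^1$ on $(0,T)$ with $\frac{d}{dt}\|f(t)\|_1=\sum_{i\ge1} i f_i'(t)$. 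Substituting the right-hand side of \eqref{eq1.3} and regrouping: the growth and decay differences telescope, using the convention $g_0=f_0=0$, into $\sum_{i\ge1} g_i f_i$ and $-\sum_{i\ge1} d_i f_i$ respectively; the fragmentation gain and loss terms cancel because, by the mass-conservation constraint $\sum_{i=1}^{j-1} i b_{i,j}=j$,
\[
\sum_{i\ge1} i\sum_{j=i+1}^{\infty} a_j b_{i,j} f_j=\sum_{j\ge2} a_j f_j\sum_{i=1}^{j-1} i b_{i,j}=\sum_{j\ge2} j a_j f_j=\sum_{i\ge1} i a_i f_i;
\]
and the coagulation gain and loss terms cancel after the substitution $m=i-j$ together with the symmetry $k_{i,j}=k_{j,i}$ of the coagulation kernel, exactly as in the classical Smoluchowski mass identity. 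What survives is
\[
\frac{d}{dt}\|f(t)\|_1=\sum_{i\ge1}(g_i-d_i)f_i(t)-\sum_{i\ge1} i s_i f_i(t)=\sum_{i\ge1} i\Bigl(\tfrac{g_i-d_i}{i}-s_i\Bigr)f_i(t)\le\omega_1\|f(t)\|_1,
\]
the last step using \eqref{eq3.8} and $f_i\ge0$.

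The real work is in justifying that these formal cancellations are legitimate, i.e.\ that every series and every interchange of summation order above is valid (by absolute convergence, equivalently by Tonelli for the non-negative pieces). This is where the smoothing recorded in Remark~\ref{remB} enters: for $t>0$ one has $f(t)\in X_{p,1}$ with $\|f(t)\|_{p,1}\le c\,t^{\alpha-1}$, and since $i\le i^p$ while each of $a_i,g_i,d_i,s_i$ is dominated by $1+\theta_i$, this controls $\sum_{i\ge1} i\,\theta_i f_i(t)$ and hence all the linear sums $\sum_i i g_i f_i$, $\sum_i i d_i f_i$, $\sum_i i s_i f_i$, $\sum_i i a_i f_i$. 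For the bilinear terms, the kernel bound \eqref{eq3.1} combined with $i\le i^p$ and $1\le i$ gives $\sum_{i,j}(i+j)\,k_{i,j} f_i(t) f_j(t)\le c\,\|f(t)\|_{p,\alpha}\|f(t)\|_1<\infty$ for $t>0$, so the coagulation double sums are absolutely convergent and the index shift is licit. Consequently the displayed identity for $\frac{d}{dt}\|f(t)\|_1$, and in particular $\frac{d}{dt}\|f(t)\|_1\le\omega_1\|f(t)\|_1$, holds for every $t\in(0,T)$.

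Finally, integrating the inequality on $[\varepsilon,t]$ for $0<\varepsilon<t<T$ yields $\|f(t)\|_1\le e^{\omega_1(t-\varepsilon)}\|f(\varepsilon)\|_1$, and letting $\varepsilon\to0^+$, using $f\in C([0,T],X_{p,\alpha})\subset C([0,T],X_1)$ so that $\|f(\varepsilon)\|_1\to\|f_0\|_1$, gives \eqref{eq3.9}. I expect the main obstacle to be precisely the two technical points just flagged: turning the mass-conservation cancellations from formal manipulations into rigorous ones (which forces one to lean on Remark~\ref{remB} and \eqref{eq3.1}), and handling the degeneration of the bound $\|f(t)\|_{p,1}\le c\,t^{\alpha-1}$ as $t\to0^+$, which is why the estimate is first established on $(0,T)$ and only afterwards extended to $t=0$ by continuity. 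Should one prefer to avoid invoking Remark~\ref{remB}, an alternative is to differentiate the finite truncations $\sum_{i=1}^N i f_i$, for which no convergence question arises, check that the boundary contributions at $i=N$ are non-positive or vanish as $N\to\infty$, and pass to the limit using $f_i\ge0$ and monotone convergence.
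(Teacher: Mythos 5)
Your proposal is correct and follows essentially the same route as the paper: differentiate the first moment term by term (legitimate because the local solution takes values in $X_{p,1}$ for $t>0$, which is exactly the justification the paper invokes), observe that the fragmentation and coagulation contributions cancel by mass conservation, bound the surviving growth--decay--sedimentation terms via \eqref{eq3.8}, and conclude with Gronwall. The paper states this in three lines; your version merely makes the cancellations and the convergence of the interchanged sums explicit.
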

\begin{proof}
Since $f\in X_{p,\alpha}^+$, we know that every term of \eqref{eq1.3} is separately well-defined for $t\in (0,T(f_0))$ (as the solution takes values in $D(X_{p,1})$) and differentiable in $X_{p,1},$ and hence in $X_1$. Thus
\[
\frac{d}{dt}\|f(t)\|_1 \le   \sum_{i = 1}^\infty \Bigl(-s_i + \frac{g_i-d_i}{i}\Bigr) i f_i
\le \omega_1\|f(t)\|_1
\]
and \eqref{eq3.9} follows
from the standard Gronwall inequality.
\end{proof}

Two remarks are in place here. First, in the case of pure fragmentation-coagulation models
($s_i=g_i=d_i = 0$, $i\ge 1$) or in the absence of growth ($g_i=0$, $i\ge 1$), we have
$\omega_1 \leq 0$.  Second, even in the absence of sedimentation the bound \eqref{eq3.9} still holds
provided there is a reasonable balance between the growth and the death processes.
\begin{theorem}\label{th3.6}
Under the assumptions of Theorem \ref{th3.4} and  Lemma~\ref{lm3.5}, any solution of \eqref{eq1.3}
with  $f_0\in X_{p,\alpha}^+$, $p>1$, is global in time.
\end{theorem}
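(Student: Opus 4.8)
The plan is to show that the local solution provided by Theorem~\ref{th3.4} cannot blow up in finite time, which by the standard continuation principle for semilinear equations forces it to be global. Recall that the continuation principle (see e.g. \cite{Pa}) says that if $[0,T_{\max})$ is the maximal interval of existence and $T_{\max}<\infty$, then necessarily $\limsup_{t\to T_{\max}^-}\|f(t)\|_{p,\alpha}=\infty$. So the entire task reduces to establishing an \emph{a priori} bound on $\|f(t)\|_{p,\alpha}$ on any finite time interval, using the non-negativity of the solution and the structural features of the model.

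First I would exploit Lemma~\ref{lm3.5}: for a non-negative solution, $\|f(t)\|_1\le e^{\omega_1 t}\|f_0\|_1$, so the mass-type norm stays bounded on bounded time intervals. Next I would upgrade this to a bound on $\|f(t)\|_{p,\alpha}$ by returning to the variation-of-constants formula \eqref{eq3.2} and estimating the nonlinearity more carefully than in the purely local Lipschitz estimate \eqref{eq3.3}. The key point is that in the loss term $F_2$ and (after the mass-rearrangement cancellation) in the gain term $F_1$, the coagulation contributes \emph{negatively} to the total mass and, more importantly, the growth of high moments is controlled: one has the elementary inequality $(i+j)^p\le 2^{p-1}(i^p+j^p)$, and using \eqref{eq3.1} together with the boundedness of $\|f\|_1$, the bilinear terms in $\|F_{\gamma,\alpha}(f)\|_p$ can be bounded by $c(1+\|f\|_{p,\alpha})\|f\|_{p,\alpha}$ with a constant $c$ depending on $\|f_0\|_1$ and $T$ but \emph{not} growing quadratically in the unknown high norm — the second factor of $\|f\|_{p,\alpha}$ gets paired against the mass norm, which is already controlled. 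Substituting this linear-in-$\|f\|_{p,\alpha}$ bound into \eqref{eq3.2}, applying \eqref{eq2.4a} and the singular Gronwall inequality (Lemma~\ref{lm3.2}), one obtains $\|f(t)\|_{p,\alpha}\le \Phi(t)$ for some finite continuous function $\Phi$ on $[0,T_{\max})$, which contradicts blow-up unless $T_{\max}=\infty$.

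More concretely, the bound to aim for is of the form
\[
\|F_{\gamma,\alpha}(f(t))\|_p \le \bigl(\gamma + c\,\|f(t)\|_1\bigr)\|f(t)\|_{p,\alpha}
\le \bigl(\gamma + c\,e^{\omega_1 T}\|f_0\|_1\bigr)\|f(t)\|_{p,\alpha},
\]
which is where I would use that in $[F_1(f)]_i$ the convolution $\sum_{j<i}k_{i-j,j}f_{i-j}f_j$, after multiplication by $i^p(1+\theta_i)^\alpha$ and summation, splits (via $i^p\le 2^{p-1}((i-j)^p+j^p)$ and $(1+\theta_i)^\alpha\le\kappa^{-1}k_{i-j,j}$ up to constants from \eqref{eq3.1}) into one factor carrying the full weight $i^p(1+\theta_i)^\alpha$ and one carrying no weight, i.e. the $\ell^1$ norm. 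The same decomposition handles $F_2$. Once the nonlinearity is controlled linearly by $\|f\|_{p,\alpha}$ with a time-dependent but finite coefficient, \eqref{eq3.2} gives
\[
\|f(t)\|_{p,\alpha} \le c_{0,p}e^{\omega_p t}\|f_0\|_{p,\alpha}
+ c_{\alpha,p}\int_0^t (t-\tau)^{-\alpha}e^{\omega_p(t-\tau)}\,C(T)\,\|f(\tau)\|_{p,\alpha}\,d\tau,
\]
and Lemma~\ref{lm3.2} (with $\gamma=0$, after absorbing the exponential) yields a finite bound on every $[0,T]$.

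The main obstacle is the bilinear gain term: a naive estimate bounds $\|F_{\gamma,\alpha}(f)\|_p$ by something quadratic in $\|f\|_{p,\alpha}$, which is only good enough for local existence, not for ruling out blow-up. The crucial trick — and the step requiring the most care — is the moment-splitting argument that pairs one copy of the high norm against the conserved (or exponentially controlled) mass norm $\|f\|_1$, so that the effective nonlinearity is linear in the quantity we are trying to bound. This is exactly why Lemma~\ref{lm3.5} and the hypothesis \eqref{eq3.8} are needed; without an a priori control of $\|f\|_1$, the argument collapses. One should also check that the constants arising from \eqref{eq3.1} and from $(i+j)^p\le 2^{p-1}(i^p+j^p)$ are uniform on $[0,T]$, but this is routine once the structure is in place.
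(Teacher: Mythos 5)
Your overall frame --- continuation principle, non-negativity, the mass bound of Lemma~\ref{lm3.5}, an a priori bound on $\|f(t)\|_{p,\alpha}$ via Duhamel and the singular Gronwall inequality --- is the right one, but the central estimate on which everything rests is not justified and is not how the paper closes the argument. You claim $\|F_{\gamma,\alpha}(f)\|_p \le (\gamma + c\|f\|_1)\|f\|_{p,\alpha}$. Splitting $k_{m,j}\le \kappa((1+\theta_m)^\alpha+(1+\theta_j)^\alpha)$ and $(m+j)^p\le 2^{p-1}(m^p+j^p)$ inside $\sum_i i^p|[F_1(f)]_i|$ and $\sum_i i^p|[F_2(f)]_i|$ produces, besides the benign term $\|f\|_0\,\|f\|_{p,\alpha}$, the cross term $\|f\|_p\,\|f\|_{0,\alpha}$ with $\|f\|_{0,\alpha}=\sum_j (1+\theta_j)^\alpha|f_j|$: the $\theta$-weight lands on the factor you want to measure in the plain mass norm. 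Since the paper imposes no growth restriction of the type $\theta_j\le C j^{1/\alpha}$ (removing exactly such restrictions is the point of the result, see the remark after Theorem~\ref{th3.6}) and no monotonicity of $\theta$, the quantity $\|f\|_{0,\alpha}$ is not dominated by $C\|f\|_1$, and the cross term is not dominated by $C\|f\|_1\|f\|_{p,\alpha}$. Your auxiliary inequality ``$(1+\theta_i)^\alpha\le \kappa^{-1}k_{i-j,j}$'' is also backwards: \eqref{eq3.1} bounds $k$ from \emph{above}, and in terms of $\theta_{i-j},\theta_j$, not $\theta_i$. Finally, the cancellation $(i+j)^p-i^p-j^p$ that you invoke for the gain term is only available when computing the moment derivative $\frac{d}{dt}\|f\|_p=\sum_i i^p \frac{df_i}{dt}$ of a non-negative solution; it is not available inside $\|F(f)\|_p=\sum_i i^p|[F(f)]_i|$, which is what enters the variation-of-constants formula, so you cannot use the cancellation and the Duhamel estimate in the same step.

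The missing ingredient is the strong-fragmentation dissipation, which your argument never touches. The paper first derives the differential inequality \eqref{eq3.10a}: the linear part contributes the negative term $-c_p\|f\|_{p,1}$ (a consequence of \eqref{eq2.1}), while the coagulation, thanks to the cancellation above, contributes only $c_2(\|f\|_1\|f\|_{p-1,\alpha}+\|f\|_{p-1}\|f\|_{1,\alpha})$. The intermediate norms are then estimated by H\"{o}lder against $\|f\|_{p,1}$ and absorbed into the dissipative term by Young's inequality, which together with Lemma~\ref{lm3.5} yields an exponential bound on $\|f\|_p$ (done first for $1<p\le 2$ and then bootstrapped to $p>2$ using the embedding $X_{1,\alpha}\subset X_{2,\alpha}$). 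Only once $\|f\|_p$ is under control does the crude bound $\|F(f)\|_p\le 2^{p+1}\kappa\|f\|_p\|f\|_{p,\alpha}$ go into the Duhamel formula \eqref{eq3.10b} and the singular Gronwall lemma to control $\|f\|_{p,\alpha}$; that is, the high norm is paired against $\|f\|_p$, not against $\|f\|_1$, and obtaining the bound on $\|f\|_p$ is where all the work and the fragmentation hypothesis reside. Without that intermediate layer your proof does not close.
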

\begin{proof}
(a) To begin, we observe that for any $f\in X_{p,\alpha}^+$ we have
\begin{align*}
\sum_{i=1}^{\infty} i^p[Y_p f]_i
& = - \sum_{i=1}^{\infty} i^p\theta_i f_i  \biggl [ \frac{a_i}{\theta_i} \frac{\triangle_i^{(p)}}{i^p}
+ \Bigl(1 - \Bigl(1 - \frac{1}{i}\Bigr)^p \Bigr) \frac{d_i}{\theta_i}  \\
&- \Bigl(\Bigl(1 + \frac{1}{i}\Bigr)^p - 1\Bigr) \frac{g_i}{\theta_i} - \frac{s_i}{\theta_i}\biggr ] \le -c_p \|f\|_{p,1} + \beta_p \|f \|_{p},
\end{align*}
where, by \eqref{eq2.1},  $c_p$ and $\beta_p$ are positive constants that ony depend on the
coefficients of \eqref{eq1.3} and  $p$
(in fact one can take $c_p$ to be any positive constant smaller than $\liminf_{i\to\infty} \frac{a_i}{\theta_i} \frac{\triangle_i^{(p)}}{i^p}$).
By \eqref{eq3.1}, the nonlinearity $F$ admits the bound
\begin{align*}
&\sum_{i=1}^{\infty} i^p F(f)_i
= \frac{1}{2} \sum_{j=1}^{\infty} \sum_{i=1}^{\infty}((i+j)^p - i^p - j^p ) k_{i,j} f_i f_j \\
& \le \frac{2^p - 1}{2} \sum_{j=1}^{\infty} \sum_{i=1}^{\infty}(i^{p-1} j + i j^{p-1} ) k_{i,j} f_i f_j = c_2 (\| f \|_{1} \|f\|_{p-1,\alpha} +
\| f \|_{p-1} \|f\|_{1,\alpha}),
 \end{align*}
with an absolute constant $c_2>0$, where we used the estimate \cite[Eqn. (5.21)]{Bana12b} for the weight.
By \eqref{eq3.10} and the non-negativity of the local classical solution
$f(t)$, for $t\in (0,T)$ we obtain
\begin{subequations}\label{eq3.10}
\begin{equation}\label{eq3.10a}
\frac{d}{dt}\|f\|_p \le -c_p\|f\|_{p,1} + \beta_p\|f\|_p + c_2(\| f \|_{1} \|f\|_{p-1,\alpha}
+ \| f \|_{p-1} \|f\|_{1,\alpha}).
\end{equation}
On the other hand, again by \eqref{eq3.1}, we have
\[
\|F(f)\|_{p} \le (1+2^{p-1}) \sum_{j=1}^{\infty} |f_{j}| \sum_{i=1}^{\infty} i^p k_{i,j}  |f_{i}|
\le 2^{p+1}\kappa \|f\|_{p}\|f\|_{p,\alpha},
\]
while the variation of constant formula and the analiticity of the semigroup $\{S_p(t)\}_{t\ge 0}$
(see estimates \eqref{eq2.4}) imply
\begin{equation}\label{eq3.10b}
\|f(t)\|_{p,\alpha} \le c_{0,p}e^{\omega_p t}\|f_0\|_{p,\alpha} + 2^{p+1}\kappa c_{\alpha,p} \int_0^t
\frac{e^{e^{\omega_p (t-\tau)}}}{(t-\tau)^\alpha}\|f(\tau)\|_p \|f(\tau)\|_{p,\alpha}d\tau.
\end{equation}
\end{subequations}
We use estimates \eqref{eq3.10} to demonstrate that the non-negative local
classical solutions cannot blow up in a finite time. For technical reasons, we separately consider two cases, $1<p\le 2$ and
$2< p<\infty$.

(b) Let $1<p\le 2$. Then \eqref{eq3.10a} implies
\[
\frac{d}{dt}\|f\|_p \le -c_p\|f\|_{p,1} + \beta_p\|f\|_p + 2c_2 \| f \|_{1} \|f\|_{1,\alpha}.
\]
To bound the product term, we use the approach similar to that of \cite{daCo95} and employ H\"older's inequality with the exponent $q = \frac{1}{\alpha}>1$
to obtain
\[
\|f\|_{1,\alpha} \le \|f\|_{p,1}^{\alpha} \|f\|^{1-\alpha}_{\frac{1-p\alpha}{1-\alpha}}
\le  \|f\|_{p,1}^{\alpha} \|f\|^{1-\alpha}_{1}
\]
and then, using Young's inequality,
\[
2c_2\| f \|_{1} \|f\|_{1,\alpha} \le c_p\|f\|_{p,1}
+ (2c_2)^{\frac{1}{1-\alpha}}\Bigr(\frac{\alpha}{c_p}\Bigl)^{\frac{\alpha}{1-\alpha}}
\|f\|_1^{\frac{2-\alpha}{1-\alpha}}.
\]
Hence
\[
\frac{d}{dt}\|f(t)\|_p \le \beta_p\|f\|_p +
(2c_2)^{\frac{1}{1-\alpha}}\Bigr(\frac{\alpha}{c_p}\Bigl)^{\frac{\alpha}{1-\alpha}}
\|f\|_1^{\frac{2-\alpha}{1-\alpha}},
\]
so that the  Gronwall inequality, combined with \eqref{eq3.9}, gives us the bound
\[
\|f(t)\|_p \le \Bigl[1+(2c_2)^{\frac{1}{1-\alpha}}\Bigr(\frac{\alpha}{c_p}\Bigl)^{\frac{\alpha}{1-\alpha}}
\Bigr] e^{\omega'_p t} \|f_0\|_p =:\beta_{\alpha,p} e^{\omega'_p t} \|f_0\|_p,
\]
where $\omega'_p \le \max\Bigl\{\beta_p, \frac{2-\alpha}{1-\alpha}\omega_1\Bigr\}$.
We combine this with \eqref{eq3.10b} to obtain
\[
e^{-\omega_p t}\|f(t)\|_{p,\alpha} \le c_{0,p}\|f_0\|_{p,\alpha} + 2^{p+1}\kappa c_{\alpha,p}\beta_{\alpha,p}
e^{\omega_p' t}\int_0^t \frac{e^{-\omega_p \tau} \|f(\tau)\|_{p,\alpha}}{(t-\tau)^\alpha}d\tau.
\]
Proceeding as in the proof of Lemma~\ref{lm3.2}, we conclude that
\begin{equation}\label{eq3.11}
\|f(t)\|_{p,\alpha} \le C_{p,\alpha}(\|f_0\|_{p,\alpha})e^{\Omega_{p,\alpha} t},
\end{equation}
where $C_{p,\alpha}(\|f_0\|_{p,\alpha})>0$ depends on the coefficients of the model \eqref{eq1.3},
parameter $1<p\le 2$ and the norm $\|f_0\|_{p,\alpha}$ of the initial data,
while the exponent $\Omega_{p,\alpha}>0$ is completely controlled by the parameter $1<p\le 2$ and the
coefficients of \eqref{eq1.3} only. Hence, the case $1<p\le 2$ is settled.

(c) When $2\le p <\infty$, we use H\"older's inequality with the exponent
$q = p' := \frac{p}{p-1}>1$. Since $0<\alpha<1$, we have $\frac{p}{q}(q\alpha-1) \le \alpha$, consequently
\[
\|f\|_{p-1,\alpha} \le \|f\|_{p,1}^{\frac{1}{q}}
\Bigl(\sum_{i =1}^\infty (1+\theta_i)^{ \frac{p}{q}(q\alpha-1)}f_i\Bigr)^{\frac{1}{p}}
\le  \|f\|_{p,1}^{\frac{p-1}{p}}\|f\|_{1,\alpha}^\frac{1}{p}
\]
and, by Young's inequality,
\[
c_2\| f \|_{1} \|f\|_{p-1,\alpha}
\le \frac{c_p}{2}\|f\|_{p,1} +
\Bigl(1-\frac{1}{p}\Bigr)^{1-p} \Bigl(\frac{2c_2}{c_p}\Bigr)^p \|f\|_{1,\alpha}^{p+1}.
\]
Similar procedure yields also
\[
c_2\| f \|_{p-1} \|f\|_{1,\alpha}
\le \frac{c_p}{2}\|f\|_{p,1} +
\Bigl(1-\frac{1}{p}\Bigr)^{1-p} \Bigl(\frac{2c_2}{c_p}\Bigr)^p \|f\|_{1,\alpha}^{p+1}.
\]
Hence, using \eqref{eq3.10a}, we obtain
\[
\frac{d}{dt}\|f\|_p \le \beta_p\|f\|_p + \gamma_p \|f\|_{1,\alpha}^{p+1},
\]
where $\gamma_p>0$ only depends on $p>1$ and the parameters of the model \eqref{eq1.3}.
From part (b)  and the continuity of the embedding $X_{1,\alpha}\subset X_{2,\alpha}$,
 we have
\[
\|f(t)\|_{1,\alpha} \le \|f(t)\|_{2,\alpha} \le C_{2,\alpha}(\|f_0\|_{2,\alpha})e^{\Omega_{2,\alpha} t}
\]
hence $\|f(t)\|_{1,\alpha}$ grows at most exponentially. Hence,
the classical Gronwall inequality yields
\[
\|f(t)\|_p \le \beta_{\alpha,p} e^{\omega'_p t} \|f_0\|_p
\]
also for $2<p<\infty$,  where constants $\beta_{\alpha,p}, \omega'_p>0$ depend on $p$ and the parameters of \eqref{eq1.3} only.
As in part (b) of the proof, the last estimate, together with the inequality \eqref{eq3.10b}, yields the
exponential bound \eqref{eq3.11} for $2<p< \infty$.
We conclude that for any $p>1$, the norm $\|f(t)\|_{p,\alpha}$ of the local solution $f$ emanating from a non-negative initial datum cannot blow-up
in a finite time. Hence, any such solution is defined globally.
\end{proof}

\begin{remark} In the strong sedimentation case, \eqref{eq2.2},  the analysis of Theorems~\ref{th3.4} and ~\ref{th3.6}
extends to the case of $p=1$, since then we also have the analytic fragmentation semigroup  in $X_1$ and the estimates can  be repeated almost verbatim.  In fact, the analysis of Theorem~\ref{th3.6} becomes much simpler as the $X_1$ norm of the solution does not blow up in finite time by Lemma \ref{lm3.5} provided \eqref{eq3.8} is satisfied and thus \eqref{eq3.10b} is immediately applicable with $p=1$.
\end{remark}
\begin{remark} As we mentioned in Introduction, Theorem~\ref{th3.6} significantly extends global solvability results
obtained earlier in the context of pure coagulation-fragmentation model (see \cite{Bana12b}), where
the existence of global solutions is established under much more restrictive assumptions that $\theta_i = a_i \leq c i^s$, $i\ge 1,$ for some constants $c,s >0$  and the exponent $\alpha$ of \eqref{eq3.1} satisfies
 $0< \alpha s\le 1$.
\end{remark}
\section{Numerical Simulations}\label{secNum}
\subsection{The Truncated Problem}
In numerical simulations, we approximate the original infinite dimensional system \eqref{eq1.3}
by the following finite dimensional counterpart:
\begin{equation}\label{eq4.1}
\begin{split}
&\frac{d u_{i}}{d t} = g_{i-1}u_i - \theta_i u_{i} + d_{i+1}u_{i+1}
+ \sum_{j= i+1}^{N} a_{j} b_{i,j} u_{j}\\
&\quad\;\; + \frac{1}{2}\sum_{j=1}^{i-1}k_{i-j,j} u_{i-j}u_{j} -  \sum_{j=1}^{N} k_{i,j} u_{i} u_{j}
+\frac{\delta_{N,i}}{N}\sum_{j=1}^{N}\sum_{n = N+1-j}^N jk_{n,j}u_nu_j,\\
&u_{i}(0) = u_{0,i}, \quad 1 \leq i \leq N.
\end{split}
\end{equation}
The quadratic penalty term ensures that the discrete coagulation process is conservative --
this property is important when dealing with pure coagulation-fragmentation models.

Let  $P_N:X_p\to\mathbb{R}^N$ and  $I_N:\mathbb{R}^N\to X_p$ denote the projector from $X_p$ onto
$\mathbb{R}^N$ and the embedding from $\mathbb{R}^N$ into $X_p$, respectively.
Below, we shall show that if $u^{(N)}$ is the solution of the truncated problem \eqref{eq4.1} with the initial condition $u_0^{(N)}$, then the sequence $I_N u^{(N)}$ approaches $f$ as the truncation index $N$ increases.

\begin{theorem}
\label{th4.1}
Assume \eqref{eq2.1}, \eqref{eq3.1} and \eqref{eq3.8} hold. The truncated problem in
\eqref{eq4.1} is locally solvable, i.e. for each $p>1$ there exists some $T>0$ such that for each $N$
\begin{equation}\label{eq4.2}
u^{(N)} \in C([0, T], X_{p,\alpha}) \cap C^{1}((0, T), X_p) \cap C((0, T), X_{p,1}),
\end{equation}
and the respective norms of $u^{(N)}$ are  bounded independently of $N$.
If, in addition, the initial datum $u^{(N)}_0$ is non-negative,  \eqref{eq4.2} holds
for any fixed $T>0$.
Finally, if for some $q>p-1$, $q\ge 0$ we have $f_0\in X_{q+1,\alpha}^+$ and $\lim_{N\to\infty}
\|I_N u^{(N)}_0 - f_0\|_{p,\alpha} = 0$, then $I_N u^{(N)} \rightarrow f $  in $C([0, T], X_{p,\alpha})$
as $N \rightarrow \infty$.
\end{theorem}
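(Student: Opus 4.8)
The plan is to transfer the analysis of Section~\ref{secGlo} to the finite system \eqref{eq4.1}, tracking every constant so that all bounds are $N$-independent, and then to compare the embedded solution $I_N u^{(N)}$ with $f$ through a ``truncation defect'' controlled by the extra moment hypothesis. Throughout I work with one shift parameter $\gamma$, chosen large enough to serve both \eqref{eq1.3} and \eqref{eq4.1} (admissible, since the classical solutions do not depend on the splitting), so that $f$ and all $u^{(N)}$ satisfy a variation-of-constants formula with, respectively, the full semigroup $\{S_{\gamma,p,\alpha}(t)\}_{t\ge0}$ and its truncated analogue. After recasting \eqref{eq4.1} as an abstract Cauchy problem on $I_N\mathbb{R}^N\subset X_p$ exactly as in Lemma~\ref{lm3.1}, two observations make everything $N$-uniform. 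First, the truncated shifted linear part generates a semigroup obeying \eqref{eq2.4} with the \emph{same} constants as the full one: the truncated fragmentation still carries the quantity $\Delta_i^{(p)}$ of \eqref{Delta}, the $i=N$ boundary terms produced by the growth and decay difference operators have the favourable sign, hence $\sum_i i^p[Y_p^{(N)}u]_i\le -c_p\|u\|_{p,1}+\beta_p\|u\|_p$ with $N$-independent $c_p,\beta_p$, and the sectoriality proof of \cite{Banasiak2018} applies verbatim. Second, the truncated nonlinearity $F_{\gamma,\alpha}^{(N)}$ obeys \eqref{eq3.3}--\eqref{eq3.4} with $N$-independent constants: its only new term, the quadratic penalty, lives in component $N$ only and, since $n+j>N$ forces $N^p\le(n+j)^p$, is majorised like the coagulation gain term. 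Hence the Picard--Lindel\"of argument of Lemma~\ref{lm3.1}, the regularisation of Theorem~\ref{th3.4} and the H\"older bound of Lemma~\ref{lm3.3} go through word for word, the existence time \eqref{eq3.5} being $N$-independent because $\sup_N\|u_0^{(N)}\|_{p,\alpha}<\infty$; this gives \eqref{eq4.2} together with $\|u^{(N)}(t)\|_{p,1}\le ct^{\alpha-1}$ uniformly in $N$, as in Remark~\ref{remB}.

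For non-negative data, positivity follows as in part~(d) of Lemma~\ref{lm3.1} (the penalty term is non-negative and, for $\gamma$ large, the loss term is dominated), and globality follows as in Lemma~\ref{lm3.5} and Theorem~\ref{th3.6}: the truncated fragmentation and the \emph{penalised} truncated coagulation are each exactly mass-conserving — this is precisely why the penalty is included — so $\frac{d}{dt}\|u^{(N)}\|_1\le\omega_1\|u^{(N)}\|_1$ by \eqref{eq3.8} (the $i=N$ boundary terms again carry the right sign), whence $\|u^{(N)}(t)\|_1\le e^{\omega_1 t}\|u_0^{(N)}\|_1$ uniformly in $N$; feeding this into the weighted estimate and running the H\"older/Young/Gronwall chain of Theorem~\ref{th3.6}(b)--(c) yields a uniform exponential bound on $\|u^{(N)}(t)\|_{p,\alpha}$, hence no finite-time blow-up and the extension of \eqref{eq4.2} to any fixed $T$.

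For the convergence, put $v^{(N)}:=I_N u^{(N)}$. Applying the full operators $Y_{\gamma,p,\alpha}$ and $F_{\gamma,\alpha}$ to $v^{(N)}$ and comparing with \eqref{eq4.1}, a short bookkeeping shows $\dot v^{(N)}=Y_{\gamma,p,\alpha}v^{(N)}+F_{\gamma,\alpha}(v^{(N)})+D_N$, where the truncation defect $D_N(t)$ has exactly three nonzero blocks: the penalty term in component $N$; the linear term $-g_N u_N^{(N)}(t)$ in component $N+1$ (the mass the full growth operator pushes past the cutoff); and minus the ``overflow'' coagulation gain into components $N+1,\dots,2N$. Subtracting the variation-of-constants formulae for $v^{(N)}$ and $f$ and using \eqref{eq2.4a}, \eqref{eq2.4b} and the Lipschitz bound \eqref{eq3.4} (its constant $L$ controlled by the uniform $X_{p,\alpha}$-bounds on $u^{(N)}$ and the bound on $f$), the error $e_N:=v^{(N)}-f$ satisfies, for $t\in(0,T]$,
\[
\|e_N(t)\|_{p,\alpha}\le c\,\|e_N(0)\|_{p,\alpha}+c\int_0^t\frac{\|D_N(\tau)\|_p}{(t-\tau)^{\alpha}}\,d\tau+cL\int_0^t\frac{\|e_N(\tau)\|_{p,\alpha}}{(t-\tau)^{\alpha}}\,d\tau .
\]
Since $\|e_N(0)\|_{p,\alpha}\to0$ by hypothesis, it remains to show $\sup_{t\in(0,T]}\int_0^t(t-\tau)^{-\alpha}\|D_N(\tau)\|_p\,d\tau\to0$, for then Lemma~\ref{lm3.2} with exponent $0$ gives $\sup_{[0,T]}\|e_N(t)\|_{p,\alpha}\to0$. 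The penalty and overflow blocks are both bounded in $X_p$ by $c\sum_{n+j>N}(n+j)^p\bigl((1+\theta_n)^\alpha+(1+\theta_j)^\alpha\bigr)u_n^{(N)}u_j^{(N)}$, and since $n+j>N$ forces $\max(n,j)>N/2$, splitting the sum and using $q+1>p$ bounds this by $cN^{-(q+1-p)}\|u^{(N)}(\tau)\|_{q+1,\alpha}\|u^{(N)}(\tau)\|_{p,\alpha}$; the linear block is bounded by $(N+1)^p(1+\theta_N)u_N^{(N)}\le cN^{-(q+1-p)}\|u^{(N)}(\tau)\|_{q+1,1}$, with $\|u^{(N)}(\tau)\|_{q+1,1}\le c\tau^{\alpha-1}$, which is integrable against $(t-\tau)^{-\alpha}$ with a value independent of $t$. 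Both estimates need a uniform-in-$N$ bound on $\|u^{(N)}\|_{q+1,\alpha}$ (and hence, via Remark~\ref{remB}, on $\tau^{1-\alpha}\|u^{(N)}(\tau)\|_{q+1,1}$), obtained by rerunning the first two steps at moment level $q+1$; this is exactly where $f_0\in X_{q+1,\alpha}^+$ and the corresponding uniform boundedness of $u_0^{(N)}$ in $X_{q+1,\alpha}$ (automatic, e.g., for $u_0^{(N)}=P_N f_0$) enter.

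The repetition of Section~\ref{secGlo} with $N$-independent constants is routine once one notes the favourable signs of the $i=N$ boundary terms and the inequality $N^p\le(n+j)^p$ for the penalty. The main obstacle is the convergence step: identifying the defect $D_N$ correctly — which terms the truncation drops, which the penalty supplies, and in which components — and then extracting the algebraic rate $N^{-(q+1-p)}$ from the hypothesis $q>p-1$. This forces one to propagate a uniform higher-moment bound ($X_{q+1,\alpha}$, hence $X_{q+1,1}$ with a $\tau^{\alpha-1}$ singularity) on $u^{(N)}$ alongside the $X_{p,\alpha}$ estimate, and to check that this singularity is still integrable against the kernel $(t-\tau)^{-\alpha}$ so that the singular Gronwall inequality of Lemma~\ref{lm3.2} closes the argument uniformly in $N$.
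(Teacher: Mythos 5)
Your overall strategy coincides with the paper's: rerun Section~\ref{secGlo} with $N$-uniform constants (the paper gets the uniform semigroup bounds more cheaply than you do, by writing $Y_N=Y_p-G_N$ with $G_N\ge 0$ bounded and invoking positivity and the variation-of-constants formula, rather than redoing the sectoriality estimates), then close a defect/error equation with the singular Gronwall inequality of Lemma~\ref{lm3.2} with exponent $0$. The substantive difference is in the convergence step: you compare the \emph{embedded} solution, $e_N=I_Nu^{(N)}-f$, so your defect $D_N$ (penalty at $u^{(N)}$, growth overflow $g_Nu^{(N)}_N$ into slot $N+1$, coagulation overflow into slots $N+1,\dots,2N$) is evaluated at $u^{(N)}$. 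The paper instead works with the \emph{projected} error $e^{(N)}=P_Nf-u^{(N)}\in\mathbb{R}^N$, which puts all the $N$-decaying source terms ($E_N^0f$, $E_N^1f$, $E_N^2f$) on $f$ alone and confines the dependence on $u^{(N)}$ to a term $H_N(t)e^{(N)}$ that is linear in the error and controlled by the already-established uniform $X_{p,\alpha}$ bounds.

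This difference is where your argument has a genuine gap. Your bounds on the penalty and overflow blocks are of the form $cN^{-(q+1-p)}\|u^{(N)}(\tau)\|_{q+1,\alpha}\|u^{(N)}(\tau)\|_{p,\alpha}$, and the linear block needs $\|u^{(N)}(\tau)\|_{q+1,1}\le c\tau^{\alpha-1}$; both require a uniform-in-$N$ bound on $u^{(N)}$ at the moment level $q+1$, hence a uniform bound on $\|u^{(N)}_0\|_{q+1,\alpha}$. The theorem assumes only $f_0\in X_{q+1,\alpha}^+$ and $\|I_Nu^{(N)}_0-f_0\|_{p,\alpha}\to 0$; these do not imply uniform boundedness of $u^{(N)}_0$ in $X_{q+1,\alpha}$ (e.g.\ $u^{(N)}_0=P_Nf_0+N^{-1}\delta_N$ satisfies the hypotheses for $q>p-1$, yet $\|u^{(N)}_0\|_{q+1,\alpha}\ge N^{q}\to\infty$). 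You acknowledge the issue parenthetically (``automatic, e.g., for $u^{(N)}_0=P_Nf_0$''), but as written your proof establishes the theorem only under this additional hypothesis on the truncated data. The fix is exactly the paper's bookkeeping: add and subtract so that the defect is evaluated at $f$ (whose $X_{q+1,\alpha}$ and $\tau^{1-\alpha}$-weighted $X_{q+1,1}$ norms are controlled by Theorem~\ref{th3.6} and Remark~\ref{remB} applied at level $q+1$), absorbing the resulting bilinear remainders, which involve $u^{(N)}$ only through its uniformly bounded $X_{p,\alpha}$ norm, into the Gronwall term. Apart from this, your identification of the defect, the $\max(n,j)>N/2$ splitting yielding the rate $N^{-(q+1-p)}$, and the integrability of $\tau^{\alpha-1}$ against $(t-\tau)^{-\alpha}$ are all sound and match the paper's estimates \eqref{Nq}--\eqref{Np} and \eqref{E0}.
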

\begin{proof}
(a) System \eqref{eq4.1} is an ODE with a smooth vector field, hence it is locally solvable for any $N>0$.
Let
\begin{align*}
&[Y_N f]_i =  g_{i-1}f_i - \theta_i f_{i} + d_{i+1}f_{i+1}, \quad 1\le i\le N,\quad
[Y_N f]_i = 0, \quad i> N,\\
&[G_N f]_i = \delta_{N+1,i} g_{i-1}f_{i-1},\quad i\ge 1,
\end{align*}
where for each $i \in \mathbb N$, $(\delta_{ij})_{j=1}^\infty$ is the Kronecker delta concentrated at $i$. We see that the linear part of the truncated equation \eqref{eq4.1} acts on the elements of the finite dimensional
subspace $I_N(\mathbb{R}^N) \subset D(T_p)$ according to the formula
\[
Y_N f = Y_p f - G_Nf.
\]
Since the operator $G_N$ is non-negative and bounded, direct application of the variation
of constant  formula implies that the semigroup $\{S_N(t)\}_{t\ge 0}$ generated by
$(Y_N, D(T_p))$ satisfies
\begin{align}
&\|S_{N}(t)\|_{X_p\to X_p}\le \|S_{p}(t)\|_{X_p\to X_p},\quad \|S_{N}(t)\|_{X_p\to X_{p,\alpha}}\le \|S_{p}(t)\|_{X_p\to X_{p,\alpha}}\nonumber\\
&\|S_{N}(t)\|_{X_p\to X_{p,1}}\le \|S_{p}(t)\|_{X_p\to X_{p,1}},
\label{SNest}
\end{align}
so that all estimates involving $\{S_N(t)\}_{t\ge 0}$ are uniform in $N>0$.
Hence, the analysis of Theorems~\ref{th3.4} applies, i.e. for some $T>0$
(that, in general, depends on $p>1$, the initial condition and the coefficients of the problem) inclusion \eqref{eq4.2} holds and the respective
norms are bounded independently of $N$.

Assuming that the initial datum $u^{(N)}_0$ is non-negative, we proceed as in Theorem~\ref{th3.6} to show
that the inclusion \eqref{eq4.2} holds for any fixed $T>0$ uniformly in $N$.
Hence, the first two claims of Theorem~\ref{th4.1} are settled.

(b) To prove the last claim, we derive the equation governing evolution of the numerical error $e^{(N)}(t) := P_N f(t) - u^{(N)}(t)\in \mathbb{R}^N, t\geq 0$. We have
\begin{align*}
&\frac{d e^{(N)}_{i}}{d t}  = g_{i-1}e^{(N)}_{i-1} - \theta_i e^{(N)}_{i} + d_{i+1}e^{(N)}_{i+1}
+\sum_{j= i+1}^{\infty} a_{j} b_{i,j} e^{(N)}_{j}
+ \delta_{N,i} d_{i+1}f_{i+1}\\
&\quad\;\; +  \frac{1}{2}\sum_{j=1}^{i-1}k_{i-j,j}
\bigl ( e^{(N)}_{i-j}f_{j} + u^{(N)}_{i-j} e^{(N)}_{j} \bigr ) -  \sum_{j=1}^{N} k_{i,j} \bigl ( e^{(N)}_{i} f_{j} + u^{(N)}_{i} e^{(N)}_{j} \bigr )\\
&\quad\;\; +  \frac{\delta_{N,i}}{N}\sum_{j=1}^N\sum_{n=N+1-j}^N j k_{j,n} \bigl ( e^{(N)}_{j} f_{n} + e^{(N)}_{n} u^{(N)}_{j}
\bigr )\\
&\quad\;\; -  \frac{\delta_{N,i}}{N}\sum_{j=1}^N\sum_{n=N+1-j}^N j k_{j,n} f_{j}f_{n}
- \sum_{j=N+1}^\infty k_{i,j} f_if_j,\\
&e^{(N)}_{i}(0) = e^{(N)}_{0,i}, \quad 1\le i \le N,
\end{align*}
or, in a compact form,
\[
\frac{d e^{(N)}}{dt} = Y_N e^{(N)} + H_N(t)e^{(N)} + \Bigr(E_N^0 f -  E_N^1 f - E_N^2 f\Bigl), \quad e^{(N)}(0) = e^{(N)}_0,
\]
where, for a given $f$ and $u^{(N)}$, $H_N(t)e^{(N)}$ is linear in $e^{(N)}$ and
\begin{align*}
&[E_N^0 f]_i = \delta_{N,i} d_{i+1} f_{i+1},\quad
[E_N^1 f]_i = \frac{\delta_{N,i}}{N}\sum_{j=1}^N\sum_{n=N+1-j}^N j k_{j,n} f_{j}f_{n},\\
&[E_N^2 f]_i = \sum_{j=N+1}^\infty k_{i,j} f_if_j,\quad 1\le i\le N.
\end{align*}

In what follows we will use two inequalities based on the properties of the function $[0,a]\ni x \mapsto \phi(x):=x^r(a-x)^r$, $a>2, r>0$. Clearly, $\phi$ is symmetric, nonnegative  with $\phi(0)=\phi(a)=0$ and has a single maximum at $x = a/2$. Thus, for $x\in [1,a-1]$ we have $\phi(x) \geq (a-1)^r$. In particular, for $q\geq 0$ and $a=N+1$ we have
\begin{equation}
N^q \leq j^q(N+1-j)^q, \quad 1\leq j\leq N,
\label{Nq}
\end{equation}
where the inequality for $q=0$ is trivial, and for $p\geq 1,$ using \eqref{Nq} and $j\leq N$
\begin{equation}
j^{p-1}(N+1-j)^p = \frac{j^{p}(N+1-j)^p}{j}\geq  N^{p-1}, \quad 1\leq j\leq N.
\label{Np}
\end{equation}
Then, by \eqref{eq3.1},  \eqref{eq3.3}, the fact that $f$ is globally defined by Theorem \ref{th3.6}, and \eqref{Np},
$H_N e^{(N)}$ satisfies
\begin{align*}
\| H_Ne^{(N)} \|_{p} & \le  \frac{1}{2} \sum_{i=1}^{N} i^p \sum_{j=1}^{i-1}k_{i-j,j}
\bigl ( |e^{(N)}_{i-j}| | f_{j}| + |u^{(N)}_{i-j}| | e^{(N)}_{j}| \bigr ) \\
&+  \sum_{i=1}^{N} i^p \sum_{j=1}^{N} k_{i,j} \bigl ( | e^{(N)}_{i}| | f_{j}| + | u^{(N)}_{i} | | e^{(N)}_{j}| \bigr )  \\
&+N^{p-1}\sum_{j=1}^N\sum_{n=N+1-j}^N j k_{j,n} \bigl ( | e^{(N)}_{j}| | f_{n}| + | u^{(N)}_{j} | | e^{(N)}_{n}| \bigr )\\
& \le (1+2^{p+2}) \kappa \|e^{(N)} \|_{p,\alpha} ( \| f \|_{p,\alpha} +  \| u^{(N)} \|_{p,\alpha})  \le \bar c \| e^{(N)} \|_{p,\alpha},
\end{align*}
where \eqref{Np} was used to get
\begin{align*}
&N^{p-1}\sum_{j=1}^N\sum_{n=N+1-j}^N j k_{j,n} \bigl ( | e^{(N)}_{j}| | f_{n}| + | u^{(N)}_{j} | | e^{(N)}_{n}| \bigr )\\
&=\sum_{j=1}^NjN^{p-1}\sum_{n=N+1-j}^N  k_{j,n} \bigl ( | e^{(N)}_{j}| | f_{n}| + | u^{(N)}_{j} | | e^{(N)}_{n}| \bigr )\\
&\leq \sum_{j=1}^Nj^p\sum_{n=N+1-j}^N (N+1-j)^p k_{j,n} \bigl ( | e^{(N)}_{j}| | f_{n}| + | u^{(N)}_{j} | | e^{(N)}_{n}| \bigr ) \\
&\leq \sum_{j=1}^Nj^p\sum_{n=N+1-j}^N n^p k_{j,n} \bigl ( | e^{(N)}_{j}| | f_{n}| + | u^{(N)}_{j} | | e^{(N)}_{n}| \bigr ).
\end{align*}
Similarly, using \eqref{Nq} to estimate $E^1_Nf$,  we have
\begin{align}
&\|E^0_Nf\|_p\le (N+1)\theta_{N+1}|f_{N+1}|,\label{E0}\\
&\|E^1_Nf\|_p\le \bar cN^{p-q-1} \|f\|_{q,\alpha}\|f\|_{q+1,\alpha}\le \bar cN^{p-q-1},\nonumber\\
&\|E^2_Nf\|_p\le \bar c\|(I-P_N)f\|_{p,\alpha},\nonumber
\end{align}
where all generic constants $\bar c>0$ are uniform in $N>0$. The last four bounds, combined
with the variation of constants formula, \[
e^{(N)}(t) = S_{N}(t)e^{(N)}_0 + \int_0^t S_{N}(t-\tau) (H(\tau) e^{(N)}(\tau) + E_N^0 f(\tau) - E_N^1(\tau) - E_N^2(\tau)) d\tau,
\]
\eqref{SNest} and \eqref{eq2.4}, yield
\begin{align*}
\| e^{(N)}(t) \|_{p,\alpha}
&\le \bar c \| e^{(N)}_0 \|_{p,\alpha}
+ \bar c \|(I-P_N)f\|_{C([0,T], X_{p,\alpha})} + \bar cN^{p-q-1}\\
&+ \bar c \int_{0}^{t}  \frac{\| e^{(N)}(\tau) \|_{p,\alpha}}{( t - \tau)^{\alpha}}d\tau + \bar c \int_0^t
\frac{\|E^0_N f(\tau)\|_p}{( t - \tau)^{\alpha}}d\tau,\quad t\in [0,T],
\end{align*}
with a constant $\bar c>0$ that does not depend on the truncation parameter $N>0$. Further, by \eqref{E0} and \eqref{Ypest}, we have
\begin{align*}
\int_0^t
\frac{\|E^0_N f(\tau)\|_p}{( t - \tau)^{\alpha}}d\tau &\leq  \int_0^t
\frac{(N+1)\theta_{N+1}|f_{N+1}(\tau)|}{( t - \tau)^{\alpha}}d\tau
\leq  N^{1-p}\int_0^t \frac{\|f(\tau)\|_{p,1}}{( t - \tau)^{\alpha}}d\tau\\
&\leq \bar{c} N^{1-p}\int_0^t
\tau^{\alpha-1}( t - \tau)^{-\alpha}d\tau = \bar{c} B(\alpha, 1-\alpha) N^{1-p} = \bar{c}N^{1-p},
\end{align*}
where, as before, $\bar{c}>0$ is independent of $N>0$.
Thus, using \eqref{gron2} with $\gamma = 0$ and
\[
c = \bar c (\| e^{(N)}_0 \|_{p,\alpha}
+ \|(I-P_N)f\|_{C([0,T], X_{p,\alpha})} + N^{p-q-1} + N^{1-p})
\]  in a fixed finite time interval
$[0,T]$, we conclude that
\[
\| e^{(N)}(t) \|_{p,\alpha} \le \bar C\Bigl[\| e^{(N)}_0 \|_{p,\alpha} + \|(I-P_N)f\|_{C([0,T], X_{p,\alpha})}
+N^{p-q-1} +  N^{1-p}\Bigr],
\]
with $\bar C>0$ independent of $N>0$.
Note that $\lim_{N\to\infty}\|e^{(N)}_0\|_{p,\alpha} = 0$,
by our assumptions, and the convergence of $\|(I-P_N)f(t)\|_{X_{p,\alpha}}$ to zero is indeed uniform on $[0,T]$ by Dini's theorem.
Hence,
\[
\lim_{N\to\infty} \|I_N u^{(N)} - f\|_{C([0,T],X_{p,\alpha})}  = 0
\]
and the last claim of the theorem is settled.
\end{proof}

\subsection{Simulations}
Below, we provide several numerical illustrations to the theory developed above.
In our simulations, we make use of the following two fragmentation kernels:
\begin{subequations}\label{eq4.3}
\begin{align}
\label{eq4.3a}
&b_{i,j} = \frac{2}{j-1},\\
\label{eq4.3b}
&b_{i,j} = \frac{i^\sigma(j-i)^\sigma}{\alpha_j},\quad \alpha_j
= \frac{1}{j}\sum_{i=1}^{j-1} i^{1+\sigma}(j-i)^\sigma,\quad \sigma>-1.
\end{align}
\end{subequations}
The coagulation process is driven by one of the unbounded kernels (see e.g. \cite{Jace2012, Anki2011,Davi1999}
for the references and particular applications)
\begin{subequations}\label{eq4.4}
\begin{align}
\label{eq4.4a}
&k_{i,j} = k_1 (i^{1/3} + j^{1/3})^{\frac{7}{3}},\\
\label{eq4.4b}
&k_{i,j} = k_2 (i + k_3) (j + k_3),
\end{align}
\end{subequations}
where $k_1$, $k_2$ and $k_3$ are positive constants.
The transport, the sedimentation and the fragmentation rates are chosen to be
\[
g_i = g i^\alpha,\quad d_i = d i^\beta,\quad s_i = s i^\gamma,\quad a_i = a i^\delta,
\]
for all $i\ge 1$, except for $d_1 =a_1 = 0$.

In view of Theorem~\ref{thm2.1}, in the calculations below it is assumed that either
\begin{subequations}\label{eq4.5}
\begin{equation}\label{eq4.5a}
\max\{\alpha,\beta,\gamma\} \le \delta,\quad p>1,
\end{equation}
or
\begin{equation}\label{eq4.5b}
\max\{\beta,\delta\} \le \gamma,\quad p=1,
\end{equation}
\end{subequations}
The conditions ensure that the associated semigroups $\{S_p(t)\}_{t\ge 0}$,
equipped with either of the fragmentation kernels \eqref{eq4.3a} or \eqref{eq4.3b}, are analytic in $X_p$, $p\ge 1$.

\begin{figure}[h!]
\begin{center}
\includegraphics[width=0.45\textwidth]{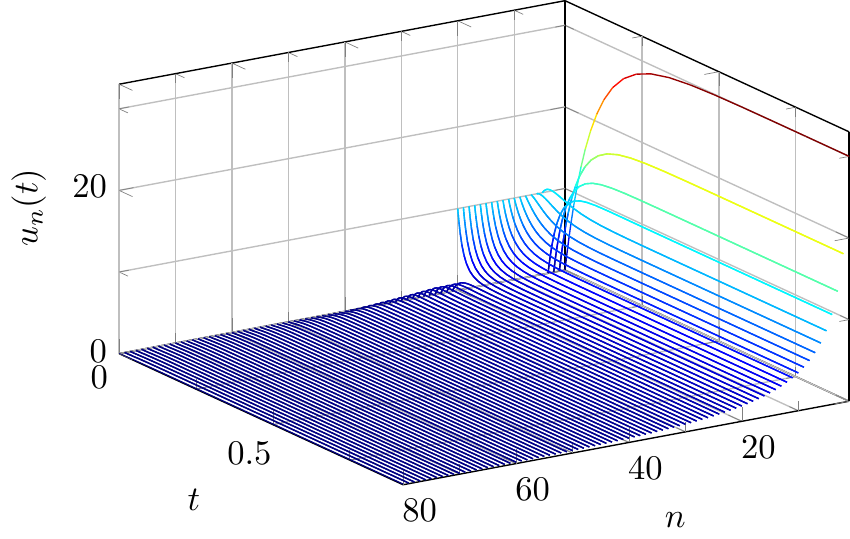}
\hfill
\includegraphics[width=0.45\textwidth]{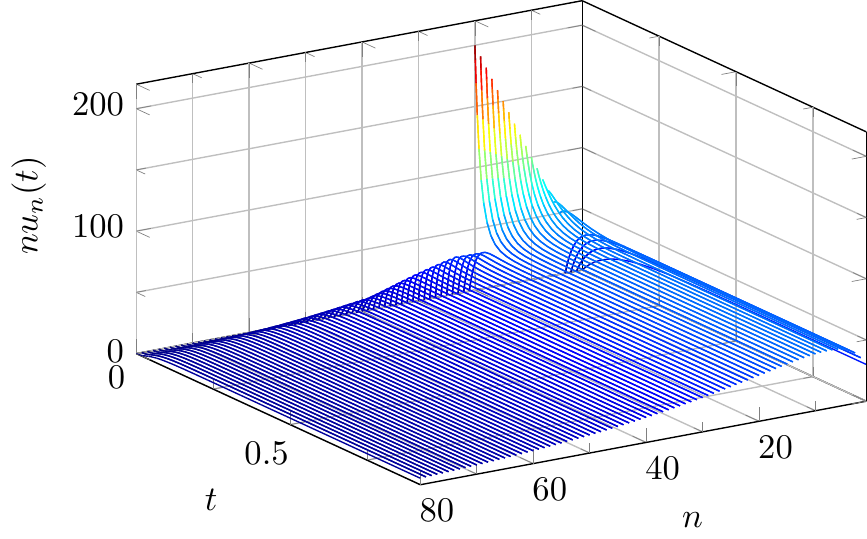}\\
\includegraphics[width=0.45\textwidth]{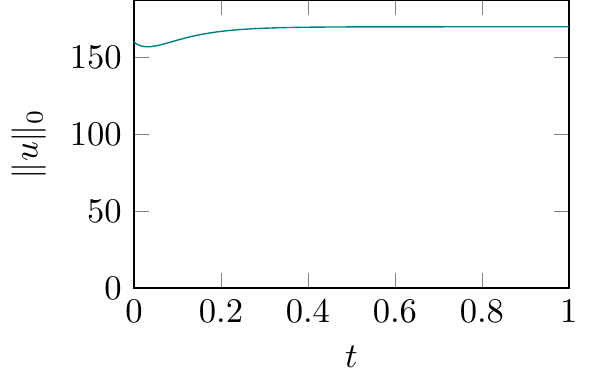}
\hfil
\includegraphics[width=0.45\textwidth]{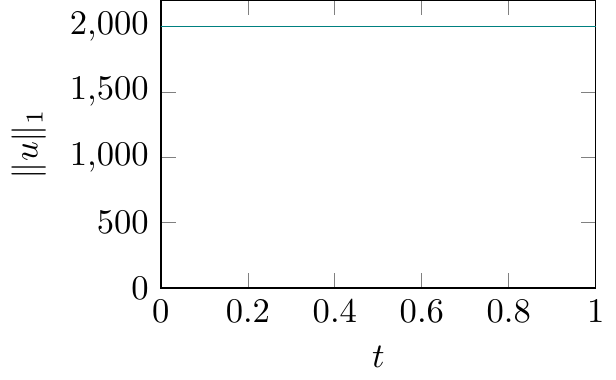}\\
\includegraphics[width=0.45\textwidth]{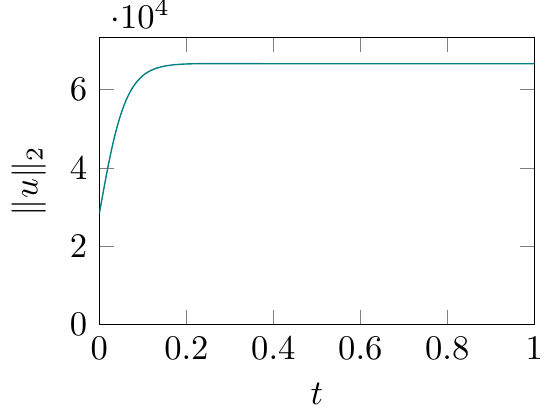}
\hfil
\includegraphics[width=0.45\textwidth]{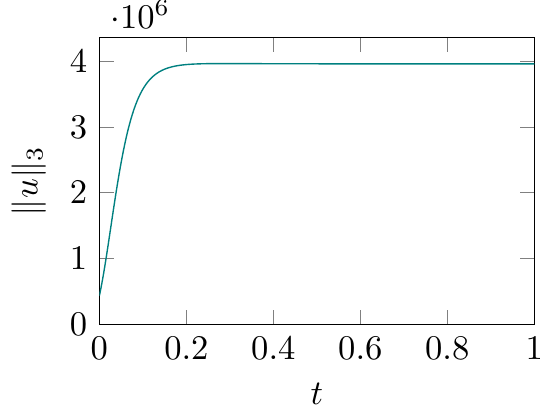}\\
\end{center}
\caption{Evolution of the pure coagulation-fragmentation model \eqref{eq1.3}
with the coagulation kernel \eqref{eq4.4a} and the fragmentation kernel
\eqref{eq4.3a}: number of clusters $u_n(t)$ (top left);
distribution of cluster masses $nu_n(t)$ (top right);
the total number of particles (middle left); the total mass (middle right)
and the higher order moments (bottom). }\label{fig1a}
\end{figure}

\begin{figure}[h!]
\begin{center}
\includegraphics[width=0.45\textwidth]{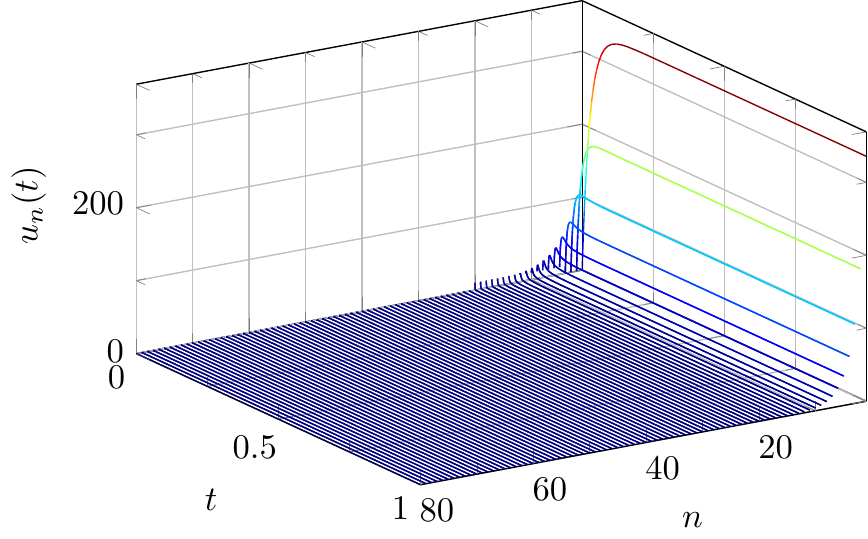}
\hfill
\includegraphics[width=0.45\textwidth]{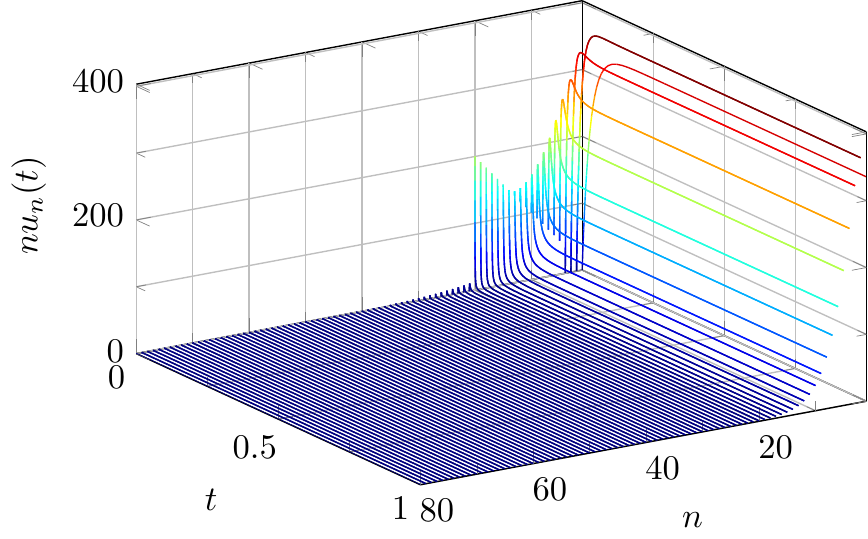}\\
\includegraphics[width=0.45\textwidth]{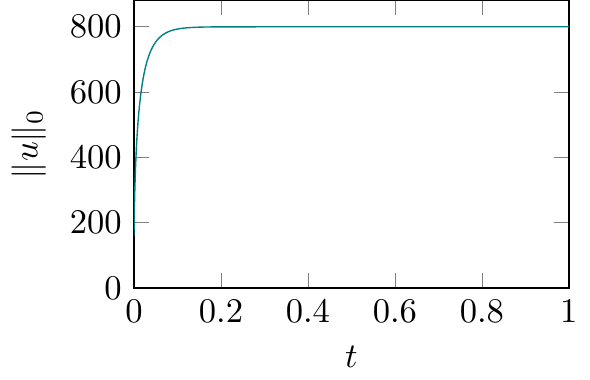}
\hfil
\includegraphics[width=0.45\textwidth]{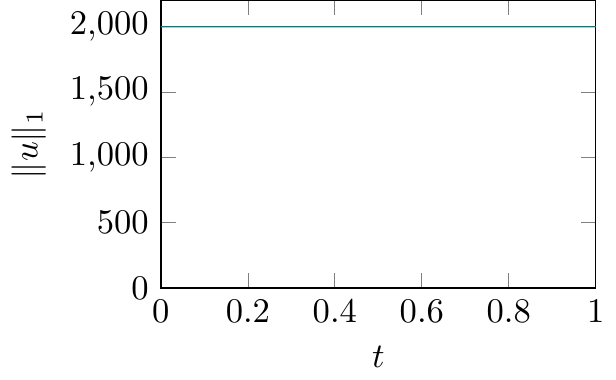}\\
\includegraphics[width=0.45\textwidth]{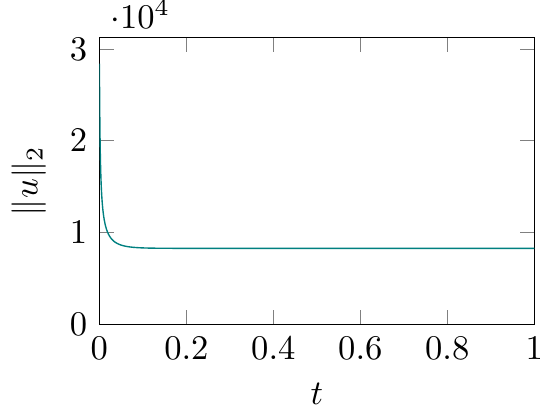}
\hfil
\includegraphics[width=0.45\textwidth]{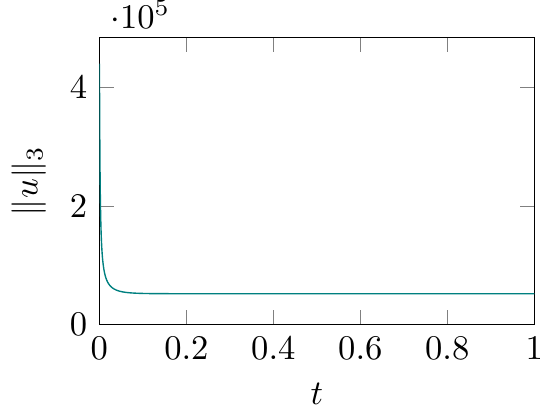}\\
\end{center}
\caption{Evolution of the pure coagulation-fragmentation model \eqref{eq1.3}
with the coagulation kernel \eqref{eq4.4b} and the fragmentation kernel
\eqref{eq4.3b}: number of clusters $u_n(t)$ (top left);
distribution of cluster masses $nu_n(t)$ (top right);
the total number of particles (middle left); the total mass (middle right)
and the higher order moments (bottom). }\label{fig1b}
\end{figure}

\subsubsection{The pure coagulation-fragmentation scenario}
\paragraph{Example 1.} To begin, we consider \eqref{eq1.3} with $g=d=s=0$, fragmentation
kernel \eqref{eq4.3a} and coagulation kernel \eqref{eq4.4a}. Here, the coagulation coefficients
satisfy $k_{i,j} = \mathcal{O}(i^{\frac{7}{9}}+j^{\frac{7}{9}})$ hence Threorem~\ref{th3.6}
applies, provided $\delta>\frac{7}{9}$. In our simulations, we let: $N=200$, $a = 1$,
$\delta = 1$ and $k_1 = 5\cdot 10^{-3}$. Since $N$ is fixed, we shorten the notation setting $u^{(N)} = u$. As the initial conditions, we take
\[
u_n(0) = 10,\quad 5\le n\le 20\quad \text{and}\quad u_n(0) = 0\quad\text{otherwise}
\]
and integrate \eqref{eq4.1} in time interval $[0, 1]$ using \verb"ode15s" built-in \verb"Matlab" ODE solver.
The results of simulations are shown in Fig.~\ref{fig1a}.

At the initial stage (the top left diagram in Fig.~\ref{fig1a}), the coagulation process does generate large
clusters with $n > 20$. However, due to the fragmentation the densities associated with very large particles
steadily go to zero and the solution settles near a steady state distribution. The evolution is further
illustrated by the top right diagram, where the evolution of mass $nu_n(t)$ concentrated
at the clusters of size $1\le n\le 80$ is plotted. As predicted by Theorem~\ref{th3.6}, the strong fragmentation
processes acting in the model prevents uncontrollable mass absorption by the clusters of extremely large sizes.
One can clearly see that after a short transition stage the mass distribution
(concentrated initially in the aggregates of size $5\le n\le 20$) quickly settles near a fixed state, in which
the bulk mass of the ensemble accumulates in clusters of moderate size.

Behaviour of the total number of particles $\|u\|_0$, the total mass of the system $\|u\|_1$ and the
higher order moments $\|u\|_2$, $\|u\|_3$ are shown in the middle and the bottom
diagrams of Fig.~\ref{fig1a}. The middle right diagram shows, in particular, that the process is conservative
(the total mass of the ensemble does not change), while the remaining three diagrams indicate that
the solution settles near a steady state.

\begin{figure}[h!]
\begin{center}
\includegraphics[width=0.45\textwidth]{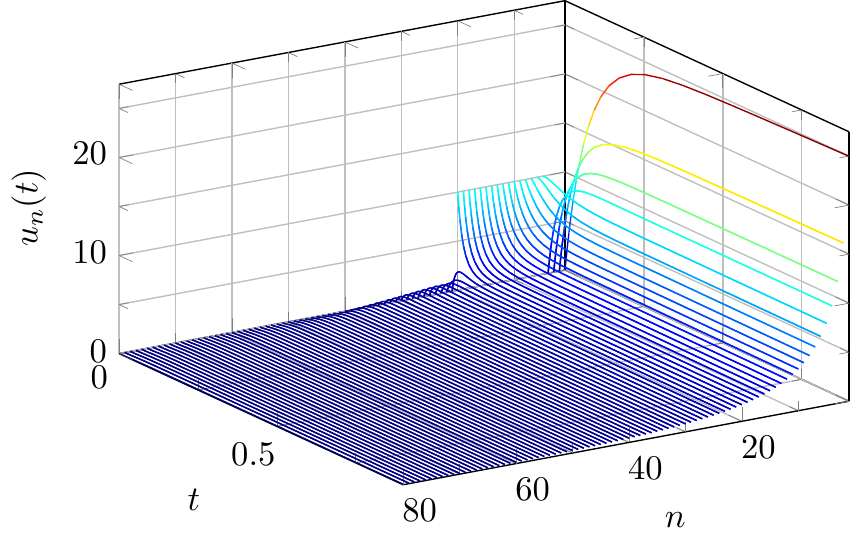}
\hfill
\includegraphics[width=0.45\textwidth]{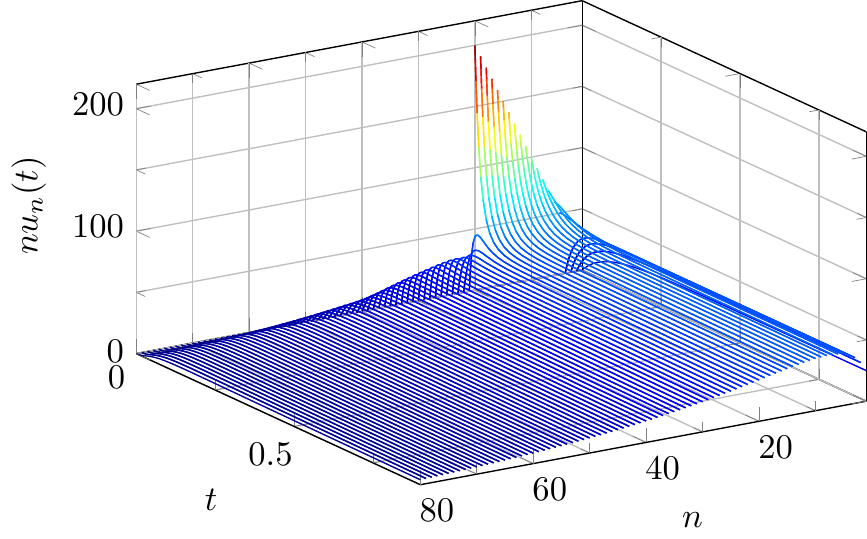}\\
\includegraphics[width=0.45\textwidth]{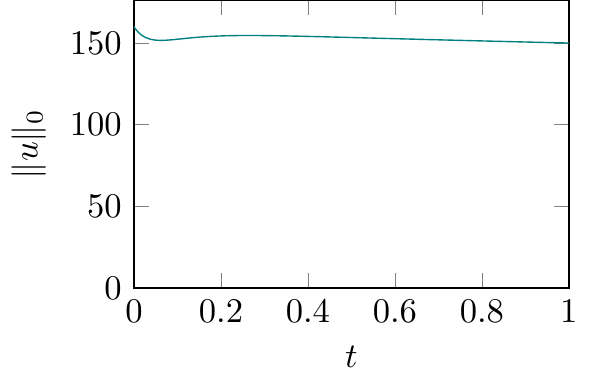}
\hfil
\includegraphics[width=0.45\textwidth]{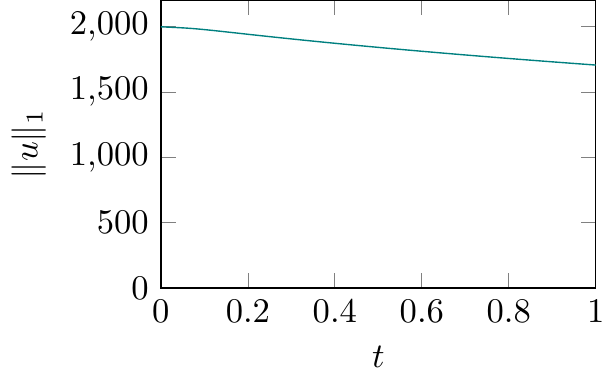}\\
\includegraphics[width=0.45\textwidth]{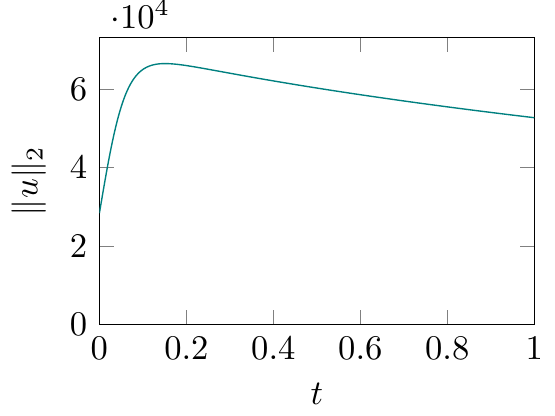}
\hfil
\includegraphics[width=0.45\textwidth]{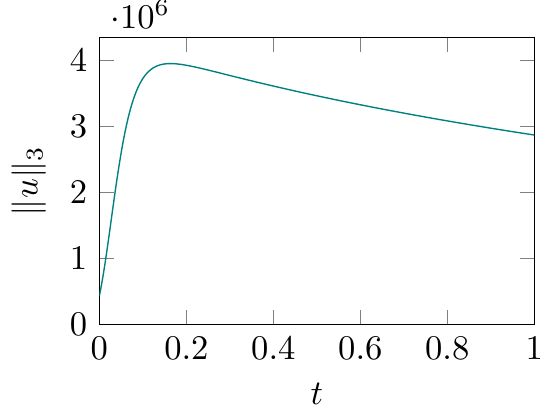}\\
\end{center}
\caption{Evolution of the growth-decay-coagulation-fragmentation model \eqref{eq1.3}
with the coagulation kernel \eqref{eq4.4a} and the fragmentation kernel
\eqref{eq4.3a}: number of clusters $u_n(t)$ (top left);
distribution of cluster masses $nu_n(t)$ (top right);
the total number of particles (middle left); the total mass (middle right)
and the higher order moments (bottom). }\label{fig2a}
\end{figure}

\begin{figure}[h!]
\begin{center}
\includegraphics[width=0.45\textwidth]{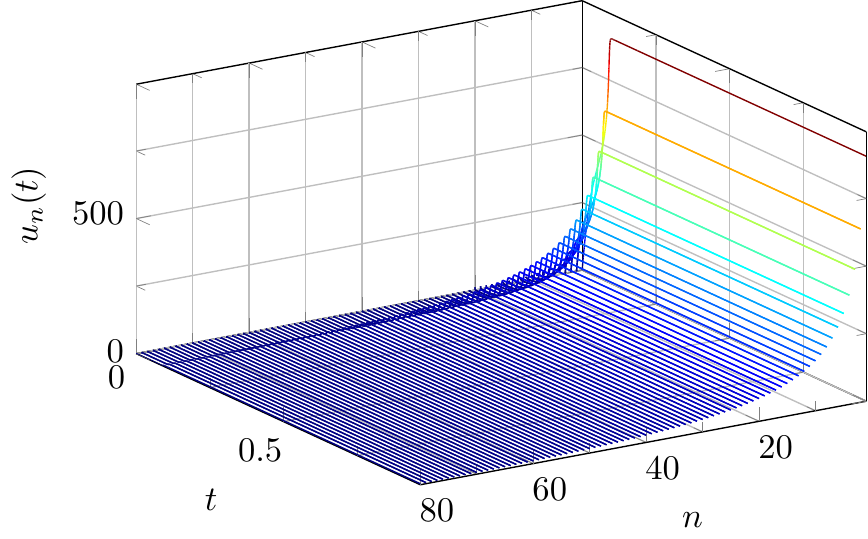}
\hfill
\includegraphics[width=0.45\textwidth]{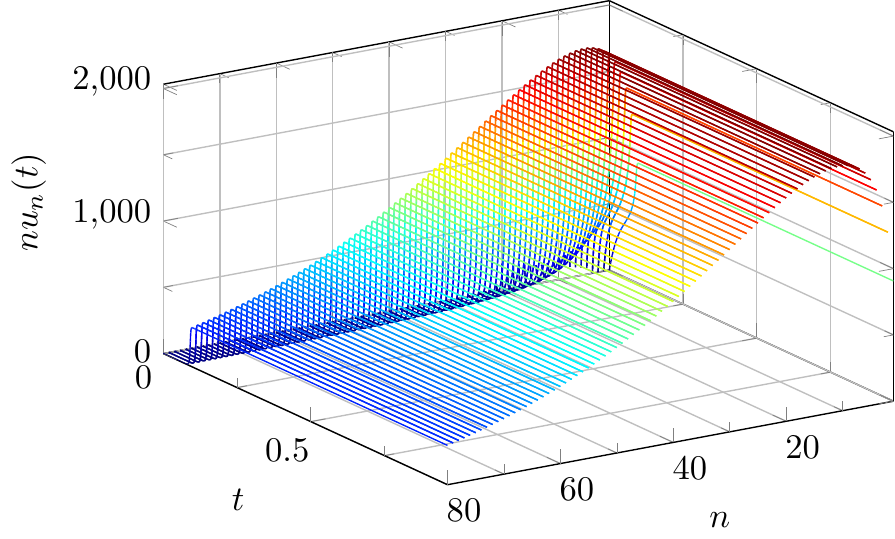}\\
\includegraphics[width=0.45\textwidth]{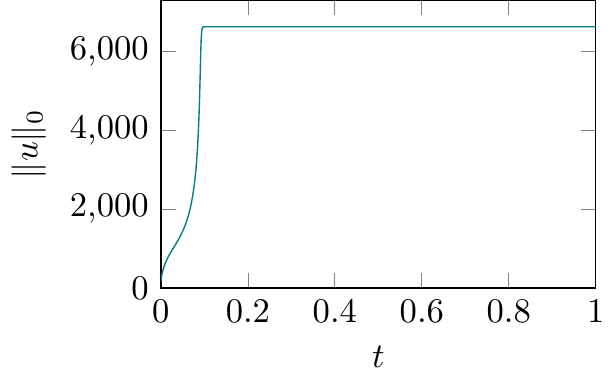}
\hfil
\includegraphics[width=0.45\textwidth]{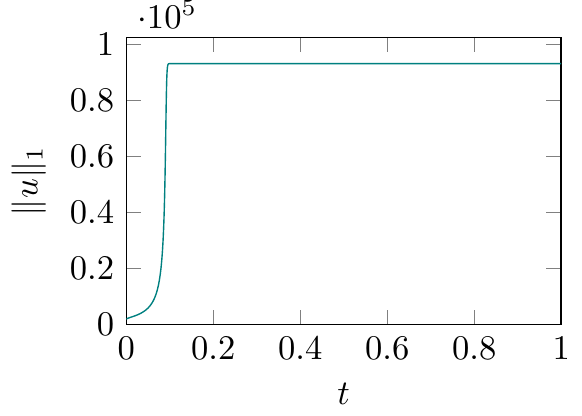}\\
\includegraphics[width=0.45\textwidth]{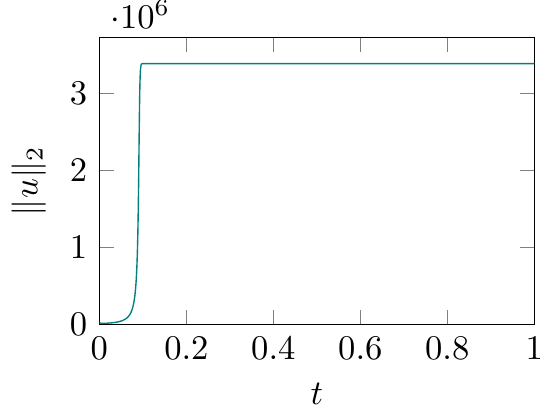}
\hfil
\includegraphics[width=0.45\textwidth]{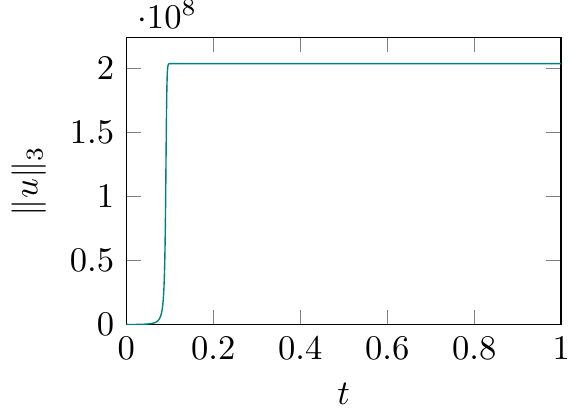}\\
\end{center}
\caption{Evolution of the growth-decay-coagulation-fragmentation model \eqref{eq1.3}
with the coagulation kernel \eqref{eq4.4b} and the fragmentation kernel
\eqref{eq4.3b}: number of clusters $u_n(t)$ (top left);
distribution of cluster masses $nu_n(t)$ (top right);
the total number of particles (middle left); the total mass (middle right)
and the higher order moments (bottom). }\label{fig2b}
\end{figure}

\paragraph{Example 2.} In our second example, we employ the fragmentation kernel \eqref{eq4.3b} with
$\sigma=10^{-1}$ and the coagulation kernel \eqref{eq4.4b} with $k_2 = 5\cdot 10^{-3}$ and $k_3=1$.
Note that $k_{i,j} = \mathcal{O}(i^2+j^2)$ and, in view of \eqref{eq3.1}, we let $\delta=2.5$.
The remaining set of parameters is identical to those used in Example 1.

In the settings described above, the growth rate of the quantities $k_{i,j}$ is superlinear. Hence,
the pure coagulation models lead to a formation of a massive particle outside the system
(the so called gelation phenomenon, see \cite{Wattis2006} and references therein).
In addition, the moment conditions, proposed in \cite{Bana12b} in context of the discrete
pure coagulation-fragmentation models, are also not satisfied. Nevertheless, the example
fells in the scope of Theorem~\ref{th3.6} and, as predicted by the theory, the numerical
solution demonstrates qualitative features similar to those observed in Example 1,
see Fig.~\ref{fig1b}. The total mass is preserved (i.e. no shattering and/or gelation occur) and after a short transition
stage the numerical trajectory settles near a stationary particles/mass distribution.

\begin{figure}[h!]
\begin{center}
\includegraphics[width=0.45\textwidth]{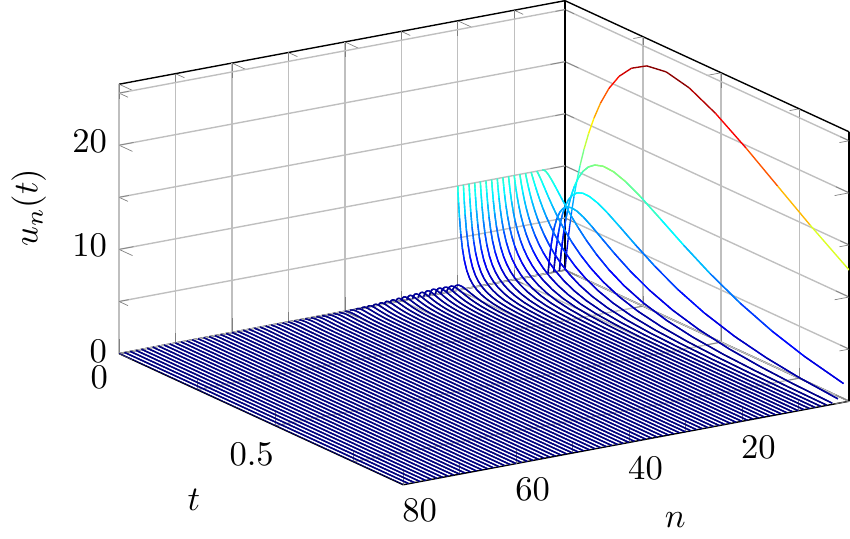}
\hfill
\includegraphics[width=0.45\textwidth]{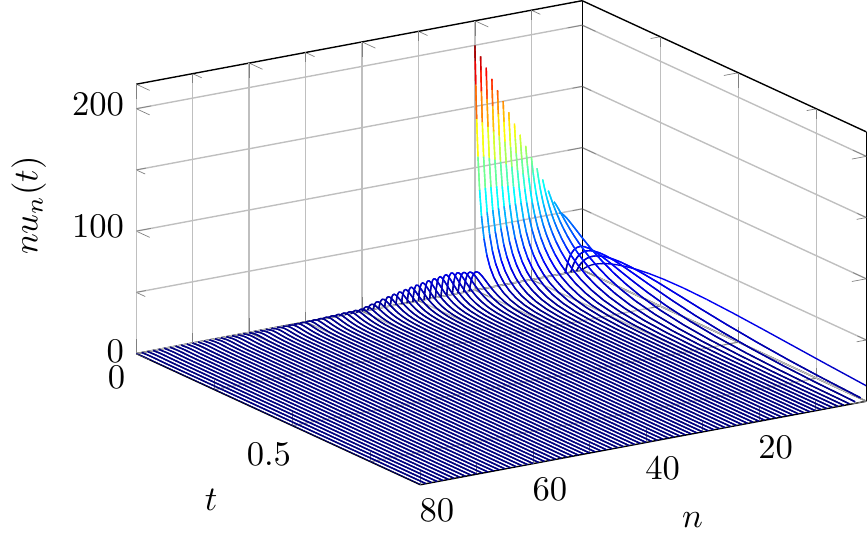}\\
\includegraphics[width=0.45\textwidth]{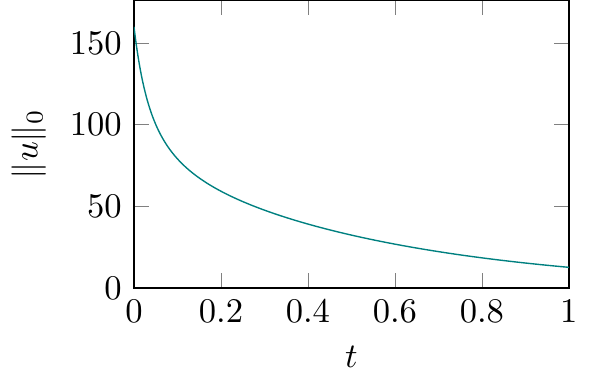}
\hfil
\includegraphics[width=0.45\textwidth]{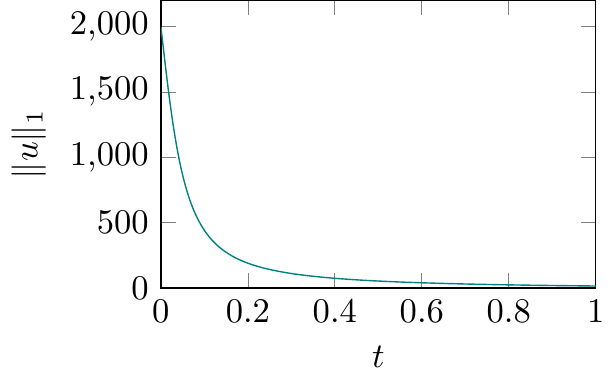}
\end{center}
\caption{Evolution of the decay-sedimentation-coagulation-fragmentation model \eqref{eq1.3}
with the coagulation kernel \eqref{eq4.4a} and the fragmentation kernel
\eqref{eq4.3a}: number of clusters $u_n(t)$ (top left);
distribution of cluster masses $nu_n(t)$ (top right);
the total number of particles (bottom left) and the total mass (bottom right). }\label{fig3a}
\end{figure}

\begin{figure}[h!]
\begin{center}
\includegraphics[width=0.45\textwidth]{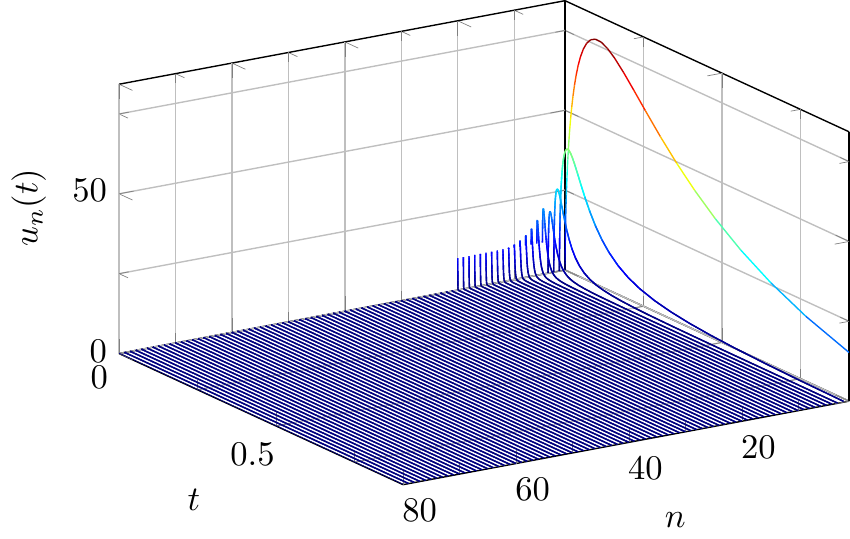}
\hfill
\includegraphics[width=0.45\textwidth]{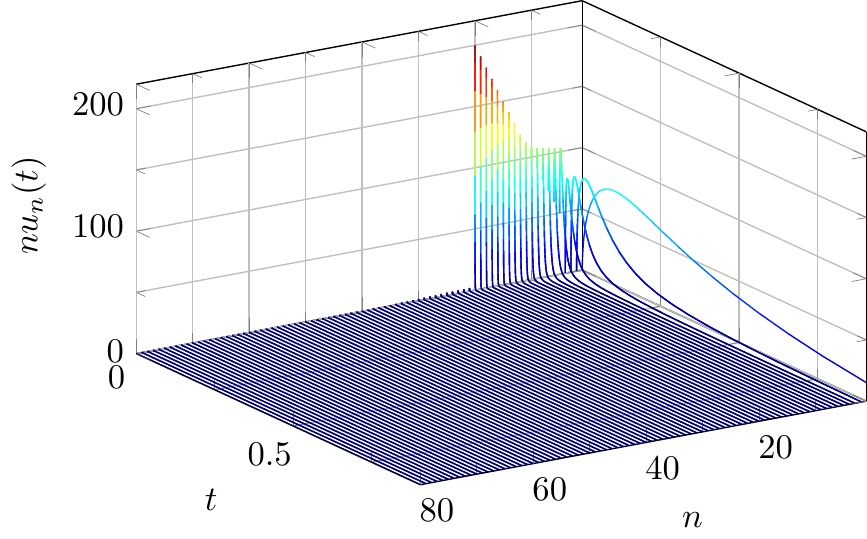}\\
\includegraphics[width=0.45\textwidth]{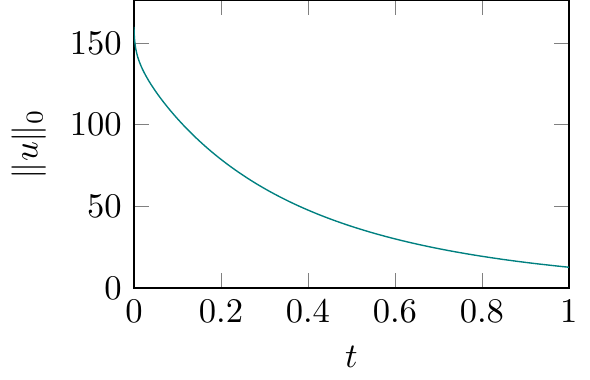}
\hfil
\includegraphics[width=0.45\textwidth]{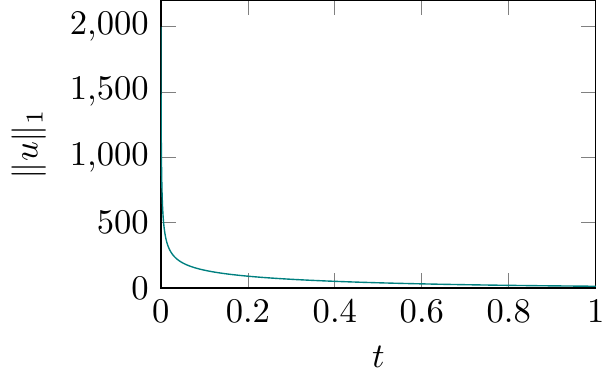}
\end{center}
\caption{Evolution of the decay-sedimentation-coagulation-fragmentation model \eqref{eq1.3}
with the coagulation kernel \eqref{eq4.4b} and the fragmentation kernel
\eqref{eq4.3b}: number of clusters $u_n(t)$ (top left);
distribution of cluster masses $nu_n(t)$ (top right);
the total number of particles (bottom left) and the total mass (bottom right). }\label{fig3b}
\end{figure}

\subsubsection{The growth-decay-sedimentation-fragmentation-coagulation scenario}
\paragraph{Example 3.} We consider the complete model \eqref{eq1.3},
with $g=d=s=a=1$, $\beta=\gamma=0$ and $\alpha=\delta=1$. The fragmentation and the
coagulation processes are controlled respectively by the kernels \eqref{eq4.3a} and \eqref{eq4.4a}, with
$k_1 = 5\cdot10^{-3}$. The truncation index $N$, the time interval $[0, T]$ and the initial condition $u_0$ are
chosen to be the same as in Examples 1 and 2.

As demonstrated by Fig.~\ref{fig2a}, in the presence of the transport processes the qualitative dynamics
of the model \eqref{eq1.3} changes (compare Fig.~\ref{fig2a} with Fig.~\ref{fig1a} and \ref{fig1b}).
The death and the sedimentation processes dominate and yield a slow decay in each of the moments
$\|u\|_p$, $p=0,1,2,3$ as time increases.

\paragraph{Example 4.} To provide a further illustration of the effect of transport processes on the
dynamics of \eqref{eq1.3}, we repeat the computations but with the fragmentation and the coagulation
kernels from Example 2. To ensure global solvability of the model, we let $g=d=s=a=1$, $\beta=\gamma=0$
and $\alpha=\delta=2.5$.

With this settings, the birth and the fragmentation terms dominate and we expect the total mass of the
ensemble to grow. As shown in Fig.~\ref{fig2b}, this is indeed the case for $t$ close to zero. However,
as time goes on, the contributions of the growth and the decay/sedimentation processes compensate each other and
the numerical solution settles near an equilibrium state.

The example demonstrates certain degree of flexibility of model \eqref{eq1.3}. A proper interplay between
the fragmentation and the transport components of the equation allows for simulation of a wide range
of realistic scenarios arising within coupled transport-fragmentation-coagulation systems.

\subsubsection{The no-growth scenario}
Our last two examples demonstrate behaviour of \eqref{eq1.3} in the absence of growth, i.e. when $g=0$ and with sufficently strong sedimentation.
In this settings, the model is globally well posed in $X_1$, provided \eqref{eq2.2} and \eqref{eq3.1} are satisfied.

\paragraph{Example 5.}
We let $g=0$, $d=s=a=1$, $\gamma=\delta=1$ and $\beta = 0$. The fragmentation and
the coagulation kernels and all other parameters are the same as in Example 1.

The results of simulations are shown in Fig.~\ref{fig3a}.
The strong sedimentation (see condition \eqref{eq2.2}) describing  the death
of clusters, prevents uncontrolled mass absorption by the clusters of  large sizes.
The top right diagrams in Fig.~\ref{fig3a} demonstrate that the bulk mass of the system remains
concentrated in clusters of moderate size. As time goes on, both processes lead to a steady decay
in the total mass of the system.

\paragraph{Example 6.} In our last example, we make use of the fragmentation and the coagulation kernels
from Examples 2 and 4. Further, we set $g=0$, $d=s=a=1$, $\gamma=\delta=2.5$ and $\beta = 0$.

As mention earlier, the growth rate of the quantities $k_{i,j}$ is superlinear and
one expects gelation in context of pure coagulation models.
Nevertheless, in complete agreement with the theory, the simulations show (see the evolution of the clusters masses in the top right diagrams
of Fig.~\ref{fig3b})
that in the presence of a sufficiently strong decay-sedimentation process the latter scenario is impossible, and the solution remains bounded in $X_1$ settings (see the bottom
right diagram in Fig~\ref{fig3b}). It is worth to mention that in this example the mechanism preventing gelation
is connected with the strong sedimentation, in contrast to Examples 2 and 4 where the central role is played
by the strong fragmentation.

\section{Conclusion}
In the paper, we considered the discrete coagulation--fragmentation models with growth, decay and sedimentation.
The analysis presented in Section 3 shows that, irrespective of the coagulation rates,  the model is always globally well posed,
provided the fragmentation (in the case of $p>1$), or the sedimentation (for $p=1$) dominate. This is in contrast to
pure coagulation models, see e.g. \cite{Wattis2006}) but confirms earlier results obtained in a more restricted setting in the discrete, \cite{Bana12b, daCo95}, and continuous, \cite{ELMP03}, cases.
Theoretical conclusions are completely supported by the numerical simulations presented in Section 4.

\bibliographystyle{plainnat}
\bibliography{b2018.bib}

\def\cprime{$'$}
\begin{thebibliography}{29}
\providecommand{\natexlab}[1]{#1}
\providecommand{\url}[1]{\texttt{#1}}
\expandafter\ifx\csname urlstyle\endcsname\relax
  \providecommand{\doi}[1]{doi: #1}\else
  \providecommand{\doi}{doi: \begingroup \urlstyle{rm}\Url}\fi

\bibitem[Ackleh and Fitzpatrick(1997)]{Ackleh1}
A.~S. Ackleh and B.~G. Fitzpatrick.
\newblock Modeling aggregation and growth processes in an algal population
  model: analysis and computations.
\newblock \emph{Journal of Mathematical Biology}, 35\penalty0 (4):\penalty0
  480--502, 1997.

\bibitem[Banasiak(2012)]{Bana12b}
J.~Banasiak.
\newblock Global classical solutions of coagulation-fragmentation equations
  with unbounded coagulation rates.
\newblock \emph{Nonlinear Analysis. Real World Applications}, 13\penalty0
  (1):\penalty0 91--105, 2012.

\bibitem[Banasiak and Lamb(2012)]{Jace2012}
J.~Banasiak and W.~Lamb.
\newblock Analytic fragmentation semigroups and continuous
  coagulation-fragmentation equations with unbounded rates.
\newblock \emph{Journal of Mathematical Analysis and Applications},
  391:\penalty0 312--322, 2012.

\bibitem[Banasiak et~al.(2017)Banasiak, Joel, and Shindin]{Banasiak2017}
J.~Banasiak, L.~O. Joel, and S.~Shindin.
\newblock Analysis and simulations of the discrete fragmentation equation with
  decay.
\newblock \emph{Mathematical Methods in the Applied Sciences}, 2017.
\newblock \doi{10.1002/mma.4666}.
\newblock (in print).

\bibitem[{Banasiak} et~al.(2018){Banasiak}, {Joel}, and
  {Shindin}]{Banasiak2018}
J.~{Banasiak}, L.~O. {Joel}, and S.~{Shindin}.
\newblock {Long term dynamics of the discrete growth-decay-fragmentation
  equation}.
\newblock \emph{ArXiv e-prints, arXiv:1801.06486}, 2018.

\bibitem[Banasiak et~al.(2018)Banasiak, Lamb, and Lauren\c{c}ot]{BLL2018}
J.~Banasiak, W.~Lamb, and P.~Lauren\c{c}ot.
\newblock \emph{Analytic Methods for Coagulation--Fragmentation Models}.
\newblock CRC Press, Boca Raton, 2018.
\newblock (in print).

\bibitem[Becker and D{\"o}ring(1935)]{Becker1935}
R.~Becker and W.~D{\"o}ring.
\newblock Kinetische behandlung der keimbildung in {\"u}bers{\"a}ttigten
  d{\"a}mpfen.
\newblock \emph{Annalen der Physik}, 416\penalty0 (8):\penalty0 719--752, 1935.

\bibitem[Bergh and L{\"o}fstr{\"o}m(1976)]{bergh1976}
J.~Bergh and J.~L{\"o}fstr{\"o}m.
\newblock \emph{Interpolation spaces: an introduction}.
\newblock Springer-Verlag, Berlin-New York, 1976.

\bibitem[Bharucha-Reid(1960)]{BR}
A.~T. Bharucha-Reid.
\newblock \emph{Elements of the theory of {M}arkov processes and their
  applications}.
\newblock McGraw-Hill Series in Probability and Statistics. McGraw-Hill Book
  Co., Inc., New York-Toronto-London, 1960.

\bibitem[Blatz and Tobolsky(1945)]{blatz1945}
P.~J. Blatz and A.~V. Tobolsky.
\newblock Note on the kinetics of systems manifesting simultaneous
  polymerization-depolymerization phenomena.
\newblock \emph{The Journal of Physical Chemistry}, 49\penalty0 (2):\penalty0
  77--80, 1945.

\bibitem[Ca{\~n}izo et~al.(2010)Ca{\~n}izo, Desvillettes, and
  Fellner]{Canizo2010}
J.~A. Ca{\~n}izo, L.~Desvillettes, and K.~Fellner.
\newblock Regularity and mass conservation for discrete
  coagulation--fragmentation equations with diffusion.
\newblock \emph{Annales de L'Institut Henri Poincare (C) Non Linear Analysis},
  27\penalty0 (2):\penalty0 639--654, 2010.

\bibitem[Cazenave and Haraux(1998)]{Caz98}
T.~Cazenave and A.~Haraux.
\newblock \emph{An introduction to semilinear evolution equations}, volume~13.
\newblock Oxford University Press, Oxford, 1998.

\bibitem[Collet(2004)]{Collet2004}
J.-F. Collet.
\newblock Some modelling issues in the theory of fragmentation-coagulation
  systems.
\newblock \emph{Communications in Mathematical Sciences}, 2:\penalty0 35--54,
  2004.

\bibitem[Collet and Poupaud(1996)]{Collet1996}
J.-F. Collet and F.~Poupaud.
\newblock Existence of solutions to coagulation-fragmentation systems with
  diffusion.
\newblock \emph{Transport Theory and Statistical Physics}, 25\penalty0
  (3-5):\penalty0 503--513, 1996.

\bibitem[Da~Costa(1995)]{daCo95}
F.~P. Da~Costa.
\newblock Existence and uniqueness of density conserving solutions to the
  coagulation-fragmentation equations with strong fragmentation.
\newblock \emph{Journal of Mathematical Analysis and Applications},
  192\penalty0 (3):\penalty0 892--914, 1995.

\bibitem[David(1999)]{Davi1999}
J.~A. David.
\newblock Deterministic and stochastic models for coalescence (aggregation and
  coagulation): A review of the mean-field theory for probabilists.
\newblock \emph{Bernoulli}, 5:\penalty0 3--48, 1999.

\bibitem[Engel and Nagel(2000)]{Engel2000}
K.~J. Engel and R.~Nagel.
\newblock \emph{One-parameter semigroups for linear evolution equations}.
\newblock Graduate Texts in Mathematics. Springer-Verlag, New York, 2000.

\bibitem[Escobedo et~al.(2003)Escobedo, Lauren{\c{c}}ot, Mischler, and
  Perthame]{ELMP03}
M.~Escobedo, Ph. Lauren{\c{c}}ot, S.~Mischler, and B.~Perthame.
\newblock Gelation and mass conservation in coagulation-fragmentation models.
\newblock \emph{Journal of Differential Equations}, 195\penalty0 (1):\penalty0
  143--174, 2003.

\bibitem[Giri et~al.(2011)Giri, Kumar, and Warnecke]{Anki2011}
A.~K. Giri, J.~Kumar, and G.~Warnecke.
\newblock The continuous coagulation equation with multiple fragmentation.
\newblock \emph{Journal of Mathematical Analysis and Applications},
  374:\penalty0 71--87, 2011.

\bibitem[Gueron and Levin(1995)]{Gue}
S.~Gueron and S.~A. Levin.
\newblock The dynamics of group formation.
\newblock \emph{Mathematical Biosciences}, 128\penalty0 (1):\penalty0 243--264,
  1995.

\bibitem[Jackson(1990)]{Jack90}
G.~A. Jackson.
\newblock A model of the formation of marine algal flocs by physical
  coagulation processes.
\newblock \emph{Deep Sea Research Part A. Oceanographic Research Papers},
  37\penalty0 (8):\penalty0 1197--1211, 1990.

\bibitem[Mirzaev and Bortz(2018)]{Mir18}
I.~Mirzaev and D.~M. Bortz.
\newblock On the existence of non-trivial steady-state size-distributions for a
  class of flocculation equations.
\newblock \emph{arXiv preprint arXiv:1804.00977}, 2018.

\bibitem[Okubo(1986)]{Oku86}
A.~Okubo.
\newblock Dynamical aspects of animal grouping: swarms, schools, flocks, and
  herds.
\newblock \emph{Advances in Biophysics}, 22:\penalty0 1--94, 1986.

\bibitem[Okubo and Levin(2001)]{Oku}
A.~Okubo and S.~A. Levin.
\newblock \emph{Diffusion and ecological problems: modern perspectives},
  volume~14 of \emph{Interdisciplinary Applied Mathematics}.
\newblock Springer-Verlag, New York, second edition, 2001.

\bibitem[Pazy(1983)]{Pa}
A.~Pazy.
\newblock \emph{Semigroups of linear operators and applications to partial
  differential equations}, volume~44 of \emph{Applied Mathematical Sciences}.
\newblock Springer-Verlag, New York, 1983.

\bibitem[Smoluchowski(1916)]{Smoluchowski1916}
M.~Smoluchowski.
\newblock Drei vortrage uber diffusion, brownsche bewegung und koagulation von
  kolloidteilchen.
\newblock \emph{Zeitschrift f\"{u}r Physik}, 17:\penalty0 557--585, 1916.

\bibitem[Smoluchowski(1917)]{Smoluchowski1917}
M.~Smoluchowski.
\newblock Versuch einer mathematischen theorie der koagulationskinetik
  kolloider l{\"o}sungen.
\newblock \emph{Zeitschrift f\"{u}r Physik}, 92:\penalty0 129 -- 168, 1917.

\bibitem[Wattis(2006)]{Wattis2006}
J.~A.~D. Wattis.
\newblock An introduction to mathematical models of coagulation--fragmentation
  processes: a discrete deterministic mean-field approach.
\newblock \emph{Physica D: Nonlinear Phenomena}, 222\penalty0 (1):\penalty0
  1--20, 2006.

\bibitem[Wrzosek(1997)]{Wrzo97}
D.~Wrzosek.
\newblock Existence of solutions for the discrete coagulation-fragmentation
  model with diffusion.
\newblock \emph{Topological Methods in Nonlinear Analysis}, 9\penalty0
  (2):\penalty0 279--296, 1997.

\end{thebibliography}

\end{document}